\newcommand{\ws}[1][1.5]{
  \mathrel{\overset{}{\scalebox{#1}[1]{$\sim$}}}
}
\newenvironment{enumeratea}{\begin{enumerate}[\upshape (a)]\setlength{\itemsep}{4pt}}{\end{enumerate}}
\numberwithin{equation}{section}
\numberwithin{figure}{section}
\numberwithin{table}{section}
\theoremstyle{plain}
\newtheorem{thm}{Theorem}[section]
\newtheorem{lem}[thm]{Lemma}
\newtheorem{prop}[thm]{Proposition}
\theoremstyle{definition}
\newtheorem{pro}[thm]{Property}
\theoremstyle{remark}
\newtheorem{rem}[thm]{Remark}
\newcommand{\lrs}{\leftrightsquigarrow}
\renewcommand{\le}{\leqslant}
\renewcommand{\ge}{\geqslant}
\newcommand{\rrb}{\rrbracket}
\newcommand{\llb}{\llbracket}
\newcommand{\ra}{\rangle}
\newcommand{\la}{\langle}
\newcommand{\wt}{\widetilde}
\newcommand{\ind}{\mathds{1}}
\newcommand{\eps}{\varepsilon}
\newcommand{\norm}[1]{\left\Vert#1\right\Vert}
\newcommand{\ie}{\emph{i.e.,}\ }
\newcommand{\iid}{\emph{i.i.d.}~}
\newcommand{\cGU}{\cG^{\text{unknown}}}
\newcommand{\vpi}{\boldsymbol{\pi}}
\newcommand{\vPi}{\boldsymbol{\Pi}}
\newcommand{\vgY}{\boldsymbol{\gY}}
 \let\gb=\beta \let\gc=\gamma  
     \let\gl=\lambda
\let\gC=\Gamma \let\gD=\Delta \let\gF=\Phi  
\let\gY=\Upsilon
\newcommand{\fp}{\mathfrak{p}}
\newcommand{\cB}{\mathcal{B}}
\newcommand{\cG}{\mathcal{G}}
\newcommand{\vA}{\mathbf{A}}\newcommand{\vB}{\mathbf{B}}
\newcommand{\vI}{\mathbf{I}}
\newcommand{\vM}{\mathbf{M}}
\newcommand{\vR}{\mathbf{R}}
\newcommand{\vV}{\mathbf{V}}
\newcommand{\vX}{\mathbf{X}}\newcommand{\vZ}{\mathbf{Z}}
\newcommand{\vi}{\mathbf{i}}
\newcommand{\vn}{\mathbf{n}}
\newcommand{\vp}{\mathbf{p}}
\newcommand{\vu}{\mathbf{u}}
\newcommand{\vv}{\mathbf{v}}\newcommand{\vw}{\mathbf{w}}\newcommand{\vx}{\mathbf{x}}
\newcommand{\vy}{\mathbf{y}}\newcommand{\vz}{\mathbf{z}}
\newcommand{\dN}{\mathds{N}}
\newcommand{\dR}{\mathds{R}}
\newcommand{\dZ}{\mathds{Z}}
\newcommand{\dL}{\mathds{L}}
\newcommand{\sE}{\mathscr{E}}
\newcommand{\sG}{\mathscr{G}}
\newcommand{\sP}{\mathscr{P}}
\newcommand{\sV}{\mathscr{V}}
\newcommand{\sH}{\mathscr{H}}
\newcommand{\sZ}{\mathscr{Z}}
\DeclareMathOperator{\cov}{\mathds{C}ov}
\DeclareMathOperator{\var}{\mathds{V}ar}
\DeclareMathOperator{\E}{\mathds{E}}
\DeclareMathOperator{\pr}{\mathds{P}}
\def\convD{\,{\buildrel d \over \longrightarrow}\,}
\def\convP{\,{\buildrel P \over \longrightarrow}\,}
\def\eqd{\,{\buildrel d \over =}\,}
\def\beq{ \begin{equation} }
 \def\eeq{ \end{equation} }
 \def\beqx{ \begin{equation*} }
 \def\eeqx{ \end{equation*} }
 \def\beqa{\begin{eqnarray}}
 \def\eeqa{\end{eqnarray}}
 \def\beqax{\begin{eqnarray*}}
 \def\eeqax{\end{eqnarray*}}
\def\eqd{\,{\buildrel d \over =}\,}
\def\corS{}
\def\corO{}
\def\corOO{}
\begin{document}

\begin{frontmatter}
\title{Thresholds for Detecting an Anomalous Path from Noisy Environments}
\runtitle{Detection Threshold for Anomalous Path}

\begin{aug}
\author{\fnms{Shirshendu} \snm{Chatterjee}\thanksref{t1}\ead[label=e1]{shirshendu@ccny.cuny.edu}  \ead[label=u1,url]{http://mathsci.ccnysites.cuny.edu/}}
\and
\author{\fnms{Ofer} \snm{Zeitouni}\thanksref{t2}\ead[label=e2]{ofer.zeitouni@weizmann.ac.il}
\ead[label=u2,url]{http://www.wisdom.weizmann.ac.il/{\raise.17ex\hbox{$\scriptstyle\sim$}}zeitouni/}}

\thankstext{t1}{Partially Supported by Simons Foundation Collaborative Research Funds.}
\thankstext{t2}{Funded in part by an Israel Science Foundation grant.}

\runauthor{Chatterjee and Zeitouni}

\affiliation{City University of New York, City College
  \thanksmark{t1}
  and 
Weizmann Institute of Science and New York University
\thanksmark{t2}
}

\address{Department of Mathematics \\City University of New York, City College \\ 160 Convent Ave,  New York, NY 10031 , USA\\
\printead{e1}\\
\printead{u1}}
\address{Department of Mathematics \\ Weizmann Institute of Science,\\ POB 26, Rehovot 76100, Israel\\
\printead{e2}\\
\printead{u2}}
\end{aug}

\date{\today}
\begin{abstract}
  \corO{
We consider the ``searching for a trail in a maze'' 
composite hypothesis testing problem, in which one
attempts to detect an anomalous 
 directed
path in a lattice 2D box of side $n$ based on 
observations on the nodes of the box. Under the signal hypothesis, 
one observes independent Gaussian variables of unit variance at all nodes,
with zero mean off the anomalous path and mean $\mu_n$ on it. 
Under the 
\corS{null} hypothesis, one observes i.i.d. standard Gaussians on all
nodes. Arias--Castro et als. (2008) showed that if 
\corS{ the unknown directed path under the signal hypothesis has known initial location}, then detection
is possible \corS{(in the minimax sense)} if $\mu_n\gg1/\sqrt{\log n}$, while it is not possible if
$\mu_n\ll 1/\log n \sqrt{\log \log n}$. In this paper, we show that this result 
continues to hold
\corS{even when the initial location of the unknown path is not known.} 
\corOO{As
 is the case with  Arias--Castro et als. (2008)}, \corS{the 
 upper bound 
 here also applies when the path is undirected.} \corOO{The improvement 
 is achieved} 
by replacing the linear detection statistic used in Arias--Castro et als. 
(2008) with
a 
\corS{polynomial statistic, which is obtained by employing a multi-scale analysis on a quadratic statistic to bootstrap
its performance}. Our analysis is motivated by ideas developed in the context 
of the analysis of random polymers in Lacoin (2010).
}
\end{abstract}

\begin{keyword}[class=AMS]
\kwd[Primary ]{62C20, 62G10}\kwd[; secondary ]{82B20, 60K37}
\end{keyword}

\begin{keyword}
\kwd{Detecting a chain of nodes in a network} \kwd{minimax detection} 
\kwd{random scenery}
\end{keyword}
\end{frontmatter}

\section{Introduction}\label{sec:int}
\subsection{General problem}
In this paper, we will address the problem of detecting anomalous 
\corS{paths within a finite two dimensional lattice, with unknown starting point}. 
\corO{We begin
with describing the context for our results. Our presentation and motivation
are strongly influenced by \cite{ACHZ08}, to which we refer for additional
background.}

\corO{Suppose we are given a graph $G$ with node  
set $V$ and a random variable $X_v$ attached to each node 
$v\in V$.  We observe a realization of this process
and wish to know whether all the variables at the nodes have the same
behavior in the sense that they are all sampled independently
from a common distribution
$F_0$ (the null hypothesis, 
which in this paper will always be the standard Gaussian distribution), 
or whether there is a path in the network, that is, a chain of 
consecutive
nodes connected by edges, along which the variables at the nodes, still independent of each other and of the variables off the path, have a
different distribution $F_1$ (the signal hypothesis,
which in this paper
will always be the Gaussian distribution with nonzero mean and unit variance).
This is thus a composite-hypothesis testing problem.
}

\corO{In this paper, as in \cite{ACHZ08}, we focus on the case where $G$
is a box $V_n$ of side $n$ in the two dimensional Euclidean lattice, and
the path under the signal hypothesis is a \textit{directed} path. 
What
distinguishes our analysis from the case treated in \cite{ACHZ08} is that
we allow for an unknown starting point. Our main result (see
Theorem
\ref{UnknownInitial} below) is that, 
\corS{similar to} the case treated in
\cite{ACHZ08}, where the
starting point is unknown, if the mean $\mu_n$ along
the unknown path satisfies $\mu_n\corS{\ge C}/\sqrt{\log n}$ \corS{for some large constant $C$}, then detection is
possible in the sense that a sequence of tests which are asymptotically 
powerful exists. (It follows from
the main result in \cite{ACHZ08} that if $\mu_n\ll 1/\log n \sqrt{\log\log n}$,
all tests are asymptotically powerless.) It is not hard to verify that
our results concerning asymptotically powerful tests apply verbatim to the case
of \corS{undirected {\it box crossing} paths, whose starting and ending points lie on two opposite sides of $V_n$, and } to the case of 
undirected \corS{{\it annulus crossing} paths, whose starting point lies }within a
macroscopic 
sub-box of  $V_n$.
}

\corO{The main difference between the analysis here and in \cite{ACHZ08} 
is in the test \corS{which is used for the hypothesis testing}. In \cite{ACHZ08}, one uses a test 
which is \corS{based on} a linear statistics of the observations, where the weights are proportional to the inverse of the distance from the (known) initial point
of the path. These tests clearly 
cannot be used in the case where the initial point
is not known. Instead, in this paper we use tests \corS{which are based on quadratic and higher order polynomials} of the observations,
with non-homogeneous weights. These are motivated by the success that \corS{certain} quadratic forms had in the evaluation of the free energy of directed polymers
in dimension $1+1$, see \cite{La10}. We note that a naive application of 
these quadratic test \corS{statistics} leads 
to a detection threshold of order $1/(\log n)^{1/4}$ \corS{(see \textsection \ref{sec:weak bd})}. The
test we eventually use is \corS{based on} a bootstrapped version of the simple quadratic
form of \corS{the observations}, whose analysis requires us to perform a multi-scale analysis
of somewhat modified detection problems\footnote{As pointed out by 
  H. Lacoin, a similar in spirit analysis was used earlier 
  in the random polymers
  context, see \cite{BL15}.}.
}

%

\subsection{Mathematical formulation of the detection problem}  \label{formulation}
 In this section we will formalize the detection problem. We 
 consider the two dimensional lattice  $\dL^2=(\sV^2,\sE^2) $ with 
 \begin{align} 
   &\text{node set } \sV^2:=\{\vx \in \dZ^2:  x_1-x_2 \text{ is even} \},\; 
   \text{and} \notag\\
   &\text{ edge set } \sE^2:=\{\la\vx,\vy\ra: \vx, \vy \in \sV^2 \text{ and } \vx\sim\vy\},
 \label{simdef}
 \end{align}
 \corO{where $\vx\sim\vy$
 if  $x_1\ne y_1, x_2\ne y_2$ and $||\vx-\vy||_1=2$.} 
 We will use $\sH_i:=\{\vx\in\sV^2: x_1=i\}$ to denote the $i$-th hyperplane. 
 We will consider
 a set of finite two dimensional graphs  $\cG_n$, which consists of certain 
 subgraphs  of $\dL^2$ induced by \corO{nodes in} 
 the hyperplanes $\cup_{0\le i<n}\sH_i$. For $a \ge 0$ let
 $\sV_n^{(a)}$ denote the node set
 \[ \sV_n^{(a)} :=\cup_{ i=0}^{n-1} \left(\sH_i \times [-i-an, i+an]\right),  \quad \text{ and }  \quad 
  \sG_n^{(a)} := \left(\sV_n^{(a)}, \sE^2\left|_{\sV_n^{(a)}}\right.\right)\]
be the subgraph of $\dL^2$ induced by \corO{nodes in} 
$\sV_n^{(a)}$. 
 \corO{We also introduce the notation}
 \[ \sZ_n^{(a)} := \{\vz\in\dZ^2: 0\le z_1<n, |z_2| \le an+z_1\}, [n]:=\{1, 2, \ldots, n\}, [n]_0 := \{0, 1, \ldots, n-1\} \]
 and
 \[ \cG_n := \left\{\sG_n^{(a)}: a\ge 0\right\}.\] 
   
Having defined the family of two dimensional finite graphs, we 
define the collection of semi directed  nearest-neighbor paths 
(left to right crossing) on these graphs, \corO{with starting point 
on the hyperplane $\sH_0$ and endpoint on
$\sH_{n-1}$}. For \corO{ a graph}
$\sG_n=(\sV_n,\sE_n)\in\cG_n$, let 
\beq \label{P_n def} 
\sP(\sG_n) := \{\vpi=\la\vpi_0,  \ldots, \vpi_{n-1}\ra: \vpi_i \in \sH_i \text{ for all } i\in [n]_0  \text{ and } \vpi_i\sim\vpi_{i-1}\text{ for all }  i \in [n-1]\}. 
\eeq
See Figure \ref{fig: Unknown Lattice} and \ref{fig: Known Lattice} for 
an instance of such a path on graphs in \corO{$\sG_n^{(a)}$, $a=0$ and $a>0$}.
\corO{In particular, the starting point of the collection of paths
 $\sP(\sG_n)$ is known if $\sG_n=\sG_n^{(0)}$ and
unknown if $\sG_n=\sG_n^{(a)}$ with $a>0$.}

\corO{We next introduce the statistical hypothesis problem on
$(\sG_n, \sP(\sG_n))$.} \corO{To each
  node $\vv$ of the graph $\sG_n$, one  attaches
a random variable $X_\vv$.}
We assume that all random variables are independent and  consider the following hypothesis testing problem.
 \begin{itemize}
   \item {\bf Null hypothesis $H_0$:} \corO{The random variables}
     $\{X_\vv: \vv\in\sV_n\}$ are \iid with common distribution $N(0,1)$.
 \item  {\bf Alternate \corO{(signal)}
   hypothesis $H_{1,n}$:} it is a composite hypothesis
   $\cup_{\vpi \in \sP(\sG_n)} H_{1,\vpi}$, 
   where, \corO{under $H_{1,\vpi}$,
   the random variables $\{X_{\vv}: \vv\in \sV_n\}$ are independent with}
 \[ 
 X_\vv \eqd \begin{cases} N(\mu_n, 1)  & \text{  if } \vv \in \vpi \\ N(0, 1)  & \text{ otherwise}\end{cases}\quad  \text{ for some } \mu_n>0. \]
 \end{itemize}
 In other words, the null hypothesis is that the random variables $\{X_\vv\}$ represent a random scenery,  whereas the alternative hypothesis suggests that there is an anomalous path along which the mean of the random variables are  nontrivial. We refer to the above hypothesis testing problem as 
``$(\sP(\sG_n),\mu_n,\gF)$ detection problem on $\sG_n$". $\gF$ represents the cdf of $N(0,1)$.
 
\begin{figure}
    \centering
    \includegraphics[height=7cm,page=1]{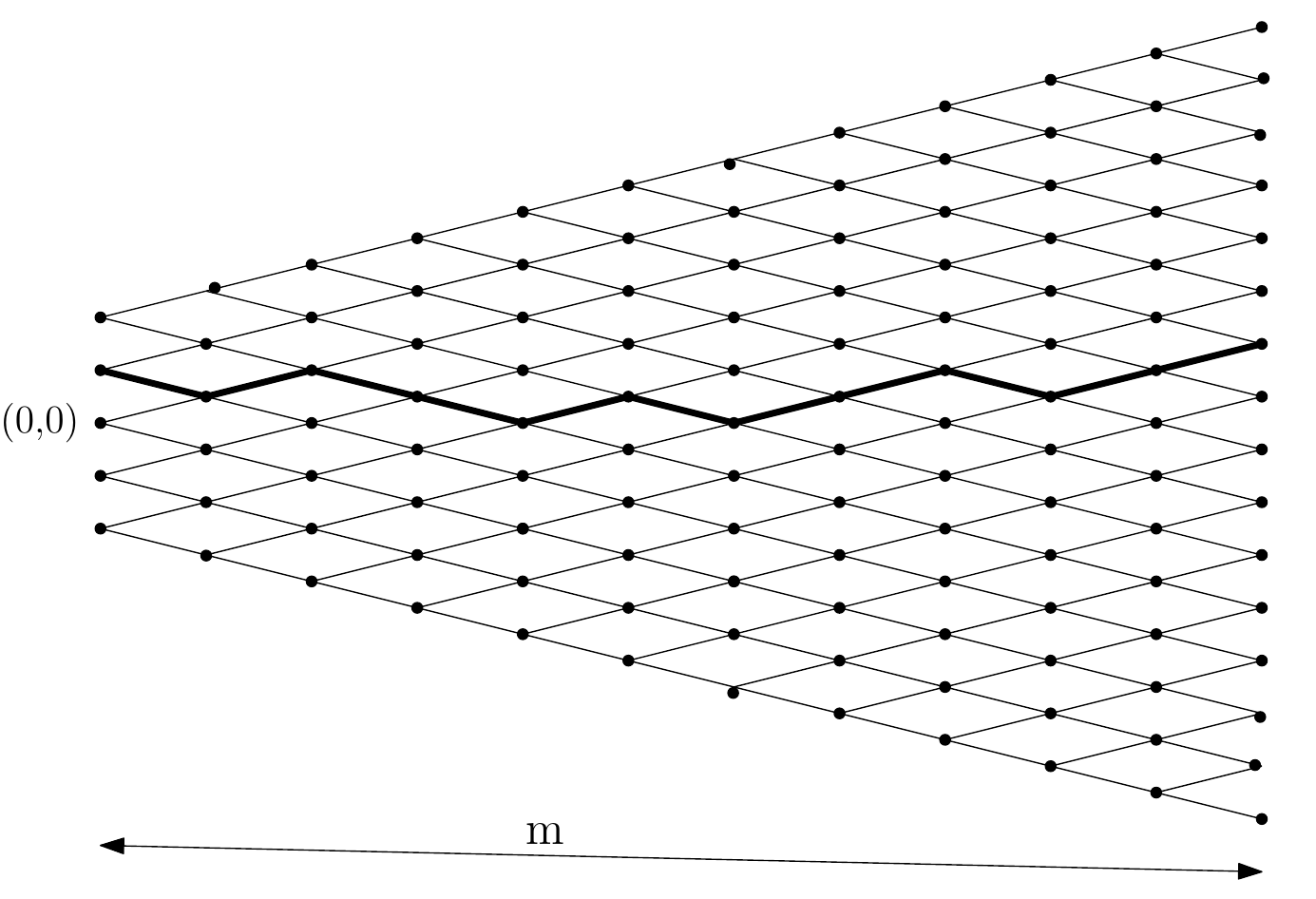}
    \caption{This is a picture of a graph in \corO{$\sG_n^{(a)}$, $a>0$.} The bold line represents a path in the two dimensional finite lattice with unknown initial location.}
    \label{fig: Unknown Lattice}
\end{figure}

\begin{figure}
    \centering
    \includegraphics[height=7cm,page=3]{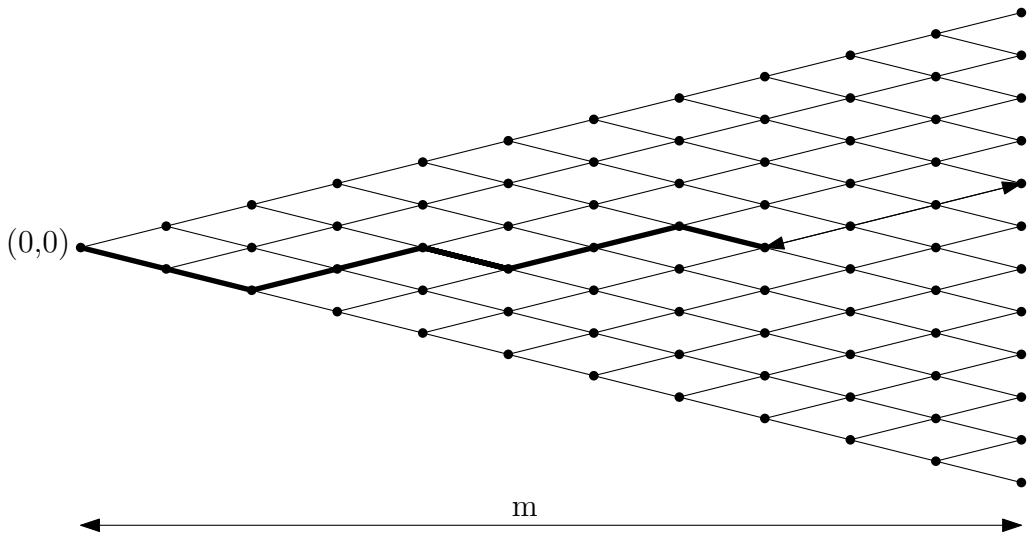}
    \caption{This is a picture of a graph in \corO{$\sG_n^{(0)}$}. The bold line represents a path in the two dimensional finite lattice with known initial location.}
    \label{fig: Known Lattice}
\end{figure}

The detection threshold is the minimum value of $\mu=\mu_n$ for which one can reliably
decide whether or not there is an anomalous path which does not follow the null distribution.
The threshold depends on the criterion used for judging the performance of the
decision rule. There are mainly two paradigms in statistical decision theory, namely
the Bayesian and the minimax approach. We will consider the second approach.
\corO{Recall} that a nonrandomized test $T_n$  is a measurable function of the collection of random variables $(X_\vv, \vv\in\sV_n)$  taking values in $\{0, 1\}$. The minimax risk of
such a test $T_n$ is defined as
\beqax
 \gc(T_n) := \pr_0(\text{Type I error}) + \sup_{\vpi\in\sP(\sG_n)} \pr_{1,\vpi}(\text{Type II error}),\;  \text{where } \\
   \pr_0(\text{Type I error}) =\pr_0(T_n=1), \quad \text{ and }  \quad \pr_{1,\vpi}(\text{Type II error})=\pr_{1,\vpi}(T_n=0).
  \eeqax
Here and later $\pr_0$ denotes the probability distribution under the null hypothesis and $\pr_{1,\vpi}$ denotes the probability distribution under the alternative hypothesis when $\vpi\in\sP(\sG_n)$ is the anomalous path.  A sequence of tests $\{T_n\}_{n\ge 1}$ for the hypothesis testing problem $(\sP(\sG_n),\mu_n)$  will be called asymptotically powerful if 
\[ \lim_{n\to \infty} \gc(T_n) =0, \]
and it will be called asymptotically powerless if
\[ \lim_{n\to\infty} \gc(T_n) \ge 1.\]

\subsection{Main result}
\corO{The main result of this paper is the following theorem.}
\begin{thm}\label{UnknownInitial}
  \corO{Fix $a\geq 0$.}
There is a finite constant $C$ large enough such that for any sequence 
of means $\{\mu_n\}_{n\ge 1}$ satisfying $\mu_n\sqrt{\log n} \ge C$, 
there exists a sequence of tests $\{T_n\}_{n\ge 1}$ for 
the hypothesis testing problem $(\corO{\sP(\sG_n^{(a)})},\mu_n,\gF)$ 
which is asymptotically powerful.  On the other hand,  for any 
sequence of means $\{\mu_n\}_{n\ge 1}$ satisfying 
\corO{$\mu_n\log n\sqrt{\log\log n }\to 0$} as $n\to\infty$, 
all sequence of tests $\{T_n\}_{n\ge 1}$ for the hypothesis 
testing problem $(\corO{\sP(\sG_n^{(a)})},\mu_n,\gF)$ 
will be asymptotically powerless.
\end{thm}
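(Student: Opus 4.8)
The plan is to establish the two halves of Theorem~\ref{UnknownInitial} separately, with the bulk of the work going into the positive (detection) statement.

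\textbf{Lower bound (powerlessness) when $\mu_n\log n\sqrt{\log\log n}\to 0$.} This follows from \cite{ACHZ08} essentially for free, because making the starting point unknown only enlarges the alternative hypothesis and therefore can only make detection harder. More precisely, for $a=0$ the graph $\sG_n^{(0)}$ together with $\sP(\sG_n^{(0)})$ is exactly the directed-path-with-known-start model of \cite{ACHZ08}, and \cite{ACHZ08} proves that every test is asymptotically powerless in that regime; for $a>0$ one observes that $\sP(\sG_n^{(0)})\subset\sP(\sG_n^{(a)})$ (up to the obvious graph embedding), so $\sup_{\vpi\in\sP(\sG_n^{(a)})}\pr_{1,\vpi}(\text{Type II error})\ge \sup_{\vpi\in\sP(\sG_n^{(0)})}\pr_{1,\vpi}(\text{Type II error})$, and hence $\gc(T_n)$ computed over the larger path class dominates the one computed over the smaller class. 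Thus $\liminf_n\gc(T_n)\ge 1$ for all $T_n$, which is the claimed powerlessness. No new idea is needed here.

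\textbf{Upper bound (detection) when $\mu_n\sqrt{\log n}\ge C$.} This is the substantive part and the one the paper's abstract flags as requiring a polynomial (bootstrapped quadratic) statistic. The plan is: (1) Reduce to the case $\mu_n=C/\sqrt{\log n}$ exactly (if $\mu_n$ is larger, detection is only easier; one can either threshold at a larger value or simply note monotonicity of the testing problem in $\mu_n$). (2) Partition the box $\sV_n^{(a)}$ into $O(1)$-many (or slowly growing many) ``strips'' of width a suitable scale so that any path in $\sP(\sG_n^{(a)})$ spends a macroscopic fraction of its length inside one strip, reducing the unknown-start problem to a bounded number of bounded-geometry subproblems via a union bound over strips. (3) Within a strip, introduce the simple quadratic statistic $Q=\sum_{\vx\sim\vy} X_\vx X_\vy$ (summed over suitably weighted pairs of nodes at consecutive hyperplanes, echoing the directed-polymer partition function of \cite{La10}); compute $\E_0[Q]$, $\var_0[Q]$, and the shift $\E_{1,\vpi}[Q]-\E_0[Q]$, and check that the naive version gives a threshold $\mu_n\gg(\log n)^{-1/4}$ as claimed in the introduction — this is the ``weak bound'' of \textsection\ref{sec:weak bd}. (4) Bootstrap: run the multi-scale argument, iterating a modified detection statistic across dyadic scales $\ell_k$, at each scale using the output of the previous scale's quadratic form as the input ``signal'' for the next, so that after $O(\log\log n)$ stages the effective signal-to-noise ratio has been amplified from the $(\log n)^{-1/4}$-level to the $(\log n)^{-1/2}$-level. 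Finally, define $T_n$ to be $1$ iff the top-scale statistic exceeds a threshold chosen between its null and alternative means, and bound Type~I error via Chebyshev/concentration under $H_0$ and Type~II error uniformly over $\vpi$ via the controlled mean shift.

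\textbf{Main obstacle.} The crux — and where I expect essentially all the difficulty to lie — is step~(4): controlling the variance of the bootstrapped polynomial statistic under $H_0$ while keeping the mean separation under $H_{1,\vpi}$ uniform over all $\vpi\in\sP(\sG_n^{(a)})$. A multi-scale/bootstrap scheme introduces dependence between scales, so the naive second-moment bound is insufficient; one needs the chaos-decomposition / hypercontractivity-style estimates (analogous to the fractional-moment and coarse-graining machinery of \cite{La10,BL15}) to show the higher-order polynomial still concentrates. Handling the unknown start (the parameter $a$) adds a union bound over $O(n)$ possible starting hyperplane positions, which must be absorbed into the concentration — this forces the per-scale error probabilities to beat a $1/\text{poly}(n)$ target, tightening the constant $C$ but not changing the order. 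The directed/undirected and annulus-crossing extensions mentioned in the introduction should then follow because the statistic $Q$ and its bootstrap are symmetric in the orientation of the edges and localize the path to a macroscopic sub-box.
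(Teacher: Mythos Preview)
Your lower-bound paragraph is exactly what the paper does, and your high-level architecture for the upper bound (weak quadratic gives $(\log n)^{-1/4}$, then multi-scale bootstrap of quadratic forms to reach $(\log n)^{-1/2}$) matches the paper's strategy. But there is one genuine gap in how you plan to deal with the unknown start, and it would likely break the argument.

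You propose in step~(2) and again in the ``Main obstacle'' paragraph to handle the unknown initial point by a union bound over the $O(n)$ possible starting positions (or over strips), forcing the per-scale error probabilities down to $1/\text{poly}(n)$. The paper does \emph{not} do this, and with the tools it uses it could not: all the concentration in the paper is via second moments (Chebyshev) and the Gaussian approximation of Proposition~\ref{Quad Kolmogorov Bd}, which give errors that are only $o(1)$ (in fact $O((\log n)^{-1/2})$), far too weak to survive a union bound over $n$ starts. Trying to push the concentration to $1/\text{poly}(n)$ for a degree-$2^K$ polynomial statistic with $K\asymp\log\log\log n$ would require genuinely new tail estimates that the paper never attempts.

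Instead, the unknown start is absorbed directly into the \emph{design of the base quadratic form}: the matrix $[\vA(\sV_n)]$ of \eqref{A:def} has entries $[A(\sV_n)]_{\vx,\vy}=\ind_{\{\vx\lrs\vy\}}/|x_1-y_1|$ over \emph{all} cone-related pairs at all horizontal separations, not just consecutive hyperplanes. The point is Lemma~\ref{A:properties}(3): $\ind_\vpi^T[\vA(\sV_n)]\ind_\vpi\asymp n\log n$ holds \emph{uniformly} for every $\vpi\in\sP(\sG_n^{(a)})$, so the signal under $H_{1,\vpi}$ is the same order regardless of where $\vpi$ starts. This is the substitute for the $1/|\text{distance from known start}|$ linear weighting of \cite{ACHZ08}: a single quadratic, no union bound. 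The multi-scale bootstrap then iterates exactly this start-agnostic quadratic on coarse-grained boxes (Proposition~\ref{Quad} is applied with the $Q^{(k)}$'s as the new ``observables''), and only a single Chebyshev bound is needed at the top level. The number of scales is $K\asymp\log\log\log n$ (from $2^K\asymp\log\log n$), not $O(\log\log n)$. Your nearest-neighbor quadratic $\sum_{\vx\sim\vy}X_\vx X_\vy$ would give the wrong variance/signal ratio; the $1/|x_1-y_1|$ weighting across all scales is what makes the computation in Lemma~\ref{A:properties} work.
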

\begin{rem}
  \corO{The case $a=0$ of Theorem \ref{UnknownInitial}
  is contained in \cite{ACHZ08}.}
\end{rem}
\corS{
\begin{rem}
  The \corOO{asymptotically powerful part of the}
  assertion of Theorem  \ref{UnknownInitial} holds for the detection problem $(\tilde\sP(\sG_n^{(a)}),\mu_n,\gF)$, where $\tilde\sP(\sG_n^{(a)})$ consists of undirected paths on $\sG_n^{(a)}$ having their one endpoint in $\sH_0$ and the other endpoint in $\sH_{n-1}$. 
\end{rem}
\begin{rem}
  For $0<b<a$, the assertion of Theorem  \ref{UnknownInitial} holds for the detection problem \corO{$(\sP(a,b),\mu_n,\gF)$, where $\sP(a,b)$} 
consists of \corO{directed} paths in the subgraph of $\dL^2$ induced by $[-an , an]^2\cap\sV^2$ having their one endpoint in $[-bn , bn]^2 \cap \sV^2$ and the other endpoint on the boundary of  $[-an, an]^2\cap\sV^2$. 
\end{rem}
}

\subsection{Notation}
Through out the paper we will use the following notation.
\begin{itemize}
\item For $n \in \dN$, we will use $[n]$ to denote the set $\{1, 2, \ldots, n\}$.
\item For $\vM \in\dR^{m\times n}$, we will use $\norm{\vM}_F$ to  denote the Frobenius norm
  \[ \norm{\vM}_F := \sqrt{\textrm{Trace}
  (\vM^T\vM)}  = \sqrt{\sum_{i\in[m], j\in[n]} M_{i,j}^2}.\]
\end{itemize}
\corO{Finally, for sequences $a=(a_n)$ and $b=(b_n)$ we write $a\asymp b$ to mean that
there exists a finite universal constant $c>0$ so that $1/c\leq a_n/b_n\leq c$
for all $n$ large.}

\section{Proof of Theorem \ref{UnknownInitial}}
\subsection{Upper bound for the detection threshold}
In this section, 
we will \corS{eventually} show that \corO{a sequence of asymptotically powerful
  tests exists for \corS{the hypothesis testing problem} $H_0$ \corS{versus} $H_1$
  if $\mu_n\sqrt{\log n}>C$ for some large enough constant $C$.} \corS{We present the proof of a weaker version of this assertion in Proposition \ref{weaker version},  which is then bootstrapped in conjunction with a renormalization argument to complete the proof (see Section \ref{full proof}). First we need to introduce certain quadratic forms, which play a crucial role in the proof. }
 
  \subsubsection{Quadratic forms associated with the detection problem}
We \corO{next introduce some useful notation.} 
In order to arrange the vertices of $\sV_n$ and associated random variables in an order, we define the following partial order.
 \[ \text{For } \vx, \vy \in \sV^2, \text{ define } \vx \preccurlyeq \vy \text{ if either $x_1<y_1$ or $x_1=y_1$ and $x_2<y_2$}.\]
Using this 
partial order, we order the random variables $\{X_\vv: \vv \in \sV_n\}$ accordingly to have the $|\sV_n| \times 1$ column vector $\vX_n$.  
For $A\subset \sV_n$, we use $\ind_A$ to denote the  $|\sV_n|\times 1$ column vectors defined by
\[ \ind_A(\vv) = \begin{cases} 1 & \text{ if } \vv \in A \\ 0 & \text{ otherwise} \end{cases}  \text{ for } \vv \in \sV_n,\]
In order to describe the test $T_n$ that will separate $H_0$ and $H_1$, we also need the following equivalence relation.
\[ \text{For } \vx, \vy \in \sV^2, \text{ define } \vx \lrs  \vy \text{ if } x_1 \ne y_1 \text{ and } |x_2-y_2| \le |x_1-y_1|.\]
It is easy to see that the above is an equivalence relation. For this equivalence relation and partial order described above, we write
\[ \llb\vx\rrb := \{\vy\in\sV_n: \vx \lrs  \vy\}, \text{ and } \vx\precsim\vy\text{ if $ \vx\lrs  \vy$ and $\vx\preccurlyeq\vy$} .\]
Using the above partial order and equivalence relation, we define 
 the $|\sV_n| \times |\sV_n|$ matrix  $[\vA(\sV_n)]$ associated with the 
 \corO{full} vertex set $\sV_n$ by
\beqa \label{A:def} 
&& [\vA(\sV_n)] = \left([A(\sV_n)]_{\vx,\vy}\right)_{\vx,\vy \in \sV_n}, \text{ where } [A(\sV_n)]_{\vx,\vy}=\frac{1}{|x_1-y_1|} \mathbf 1_{\{\vx \lrs  \vy\}}, \\
&& [\bar\vA(\sV_n)] := \left(\sqrt 2\norm{[\vA(\sV_n)]}_F\right)^{-1} [\vA(\sV_n)]. \notag
\eeqa
The matrix \corO{$\vA(\sV_n)$}
will play a special role in our argument. \corO{The following lemma,
  whose proof is postponed to \textsection  
\ref{sec-quadlemproofs}, collects some of 
its elementary properties.}
\begin{lem} \label{A:properties}
For any graph $\sG_n=(\sV_n,\sE_n)\in\cG_n$ and for the matrix $[\vA(\cdot)]$ as defined in \eqref{A:def},
\beqax
 (1) & \norm{[\vA(\sV_n)]}_F  & \asymp n\sqrt{\log n}, \\
 (2) & \norm{[\vA(\sV_n)]} & = O(n), \text{ so } \norm{[\bar\vA(\sV_n)]} = O\left(1/\sqrt{\log n}\right),  \\ 
 (3) &  \ind_{\vpi}^T [\vA(\sV_n)] \ind_{\vpi}   & \asymp n\log n \;\;\text{ for any } \pi\in\sP_n,\\
 (4) &  \ind_{\vpi}^T [\vA(\sV_n)]^2 \ind_{\vpi}   & \asymp n^2 \;\;\text{ for any } \pi\in\sP_n \\
 (5) &  \ind_{\vpi}^T [\vA(\sV_n)] \textrm{\rm Diag}(\ind_{\vpi}) [\vA(\sV_n)] \ind_{\vpi}   & \asymp n(\log n)^2 \;\;\text{ for any } \pi\in\sP_n. \\
 (6) &  \norm{[\vA(\sV_n)]^2}_F  & \asymp n^4\log n, \text{ so } \norm{[\bar\vA(\sV_n)]^2}_F \asymp (\log n)^{-1/2} .
\eeqax
\end{lem}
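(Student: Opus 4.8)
The plan is to reduce each of the six estimates to counting arguments on the two--dimensional lattice and to the geometry of the equivalence relation $\lrs$. The key structural observation is that for $\vx\in\sH_i$, the equivalence class $\llb\vx\rrb$ consists exactly of those $\vy\in\sH_j$ with $j\ne i$ and $|x_2-y_2|\le |i-j|$; in particular there are $\asymp |i-j|$ such $\vy$ on each hyperplane $\sH_j$, each contributing weight $1/|i-j|$ to row $\vx$ of $[\vA(\sV_n)]$. This immediately gives, for each fixed $\vx$, a row sum of $[\vA(\sV_n)]$ that is $\asymp \sum_{j\ne i, 0\le j<n} 1 = \Theta(n)$ (after the $1/|i-j|$ cancels the $|i-j|$ points), which is the mechanism behind (2): $\norm{[\vA(\sV_n)]}\le \max_\vx(\text{row sum})=O(n)$ by the standard $\ell^1\to\ell^\infty$ bound on the operator norm of a symmetric nonnegative matrix. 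For (1), I would compute $\norm{[\vA(\sV_n)]}_F^2=\sum_{\vx,\vy}|x_1-y_1|^{-2}\ind_{\{\vx\lrs\vy\}}$; grouping by the hyperplane indices $i=x_1$, $j=y_1$ and by $x_2$, the inner count of valid $y_2$ is $\asymp|i-j|$, so the sum is $\asymp \sum_{i}\sum_{j\ne i}\sum_{x_2} |i-j|^{-2}\cdot|i-j| \asymp n\cdot\sum_{i}\sum_{j\ne i}|i-j|^{-1}\asymp n\cdot n\log n$, giving $\norm{[\vA(\sV_n)]}_F\asymp n\sqrt{\log n}$. (Here one must be slightly careful near the sloped boundary of $\sV_n^{(a)}$, but since the box has width $\asymp n + |x_1|$ at height $x_1$, the boundary only changes constants, not the order.)

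For (3), $\ind_\vpi^T[\vA(\sV_n)]\ind_\vpi=\sum_{i\ne j}|i-j|^{-1}\ind_{\{\vpi_i\lrs\vpi_j\}}$; since $\vpi$ is a nearest--neighbor left--right path, $|(\vpi_i)_2-(\vpi_j)_2|\le |i-j|$ always holds, so $\vpi_i\lrs\vpi_j$ for every $i\ne j$, and the sum equals $\sum_{i\ne j}|i-j|^{-1}\asymp n\log n$. Estimate (4) is similar: $\ind_\vpi^T[\vA(\sV_n)]^2\ind_\vpi=\sum_{i,j}\big(\sum_{\vz}[\vA]_{\vpi_i,\vz}[\vA]_{\vz,\vpi_j}\big)$, and since for a path all pairs along it are related, the dominant contribution comes from $\vz$ ranging over a hyperplane $\sH_k$, contributing $\asymp\sum_k |i-k|^{-1}|k-j|^{-1}\cdot(\text{number of valid }\vz)$ with the count of $\vz$ being $\asymp\min(|i-k|,|k-j|)$; summing over $i,j,k\in[n]_0$ yields $\asymp n^2$. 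For (5), the diagonal projection $\mathrm{Diag}(\ind_\vpi)$ forces the middle index $z$ to lie on $\vpi$, so $\ind_\vpi^T[\vA]\mathrm{Diag}(\ind_\vpi)[\vA]\ind_\vpi=\sum_{i,j,k}|i-k|^{-1}|k-j|^{-1}$ over distinct $i,j,k\in[n]_0$ with all path vertices related, which is $\asymp n\,(\log n)^2$. Estimate (6) then follows from combining the row/column structure: $\norm{[\vA(\sV_n)]^2}_F^2=\mathrm{Trace}([\vA]^4)$, and using that $[\vA]^2$ has entries $\asymp \min(|i-k|,|k-j|)$-type sums one gets $\asymp n^8 \log n$ (equivalently one bounds $\norm{[\vA]^2}_F\le\norm{[\vA]}\cdot\norm{[\vA]}_F=O(n)\cdot O(n\sqrt{\log n})$ for the upper bound, and a direct diagonal--dominant count for the matching lower bound), whence $\norm{[\vA(\sV_n)]^2}_F\asymp n^4\sqrt{\log n}$; the normalized statements about $[\bar\vA(\sV_n)]$ are then immediate from the definition $[\bar\vA(\sV_n)]=(\sqrt2\,\norm{[\vA(\sV_n)]}_F)^{-1}[\vA(\sV_n)]$ together with part (1).

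The main obstacle I anticipate is not any single estimate but keeping the boundary effects under control uniformly in $a\ge 0$: the hyperplane $\sH_i$ intersected with $\sV_n^{(a)}$ is the interval $[-i-an,\,i+an]$, so its length grows with $i$, and the equivalence class $\llb\vx\rrb\cap\sH_j$ can be truncated when $\vx$ is near the sloped edge. One has to check that in each of the sums above, replacing the "ideal" count $|i-j|$ of related points by the truncated count only costs a constant factor — this is true because the slope of the boundary is $\pm1$, which matches the $\pm1$ slope built into the relation $\lrs$, so truncation removes at most a constant fraction of each class. A secondary technical point is verifying the matching lower bounds in (4) and (6): for these it suffices to restrict the inner sum over $\vz$ (resp.\ the trace) to indices $k$ in the "middle third" $[n/3, 2n/3]$, where $\min(|i-k|,|k-j|)\asymp |i-j|$ for the relevant $i,j$, producing a clean $\asymp n^2$ (resp.\ $\asymp n^8\log n$) lower bound without boundary interference.
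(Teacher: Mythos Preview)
Your arguments for (1)--(5) are essentially identical to the paper's: the same row-sum bound for (2), the same direct count for (1) and (3), and the same decomposition over the position of the intermediate index $k$ relative to $i,j$ for (4) and (5). Your remark about boundary effects (that the $\pm 1$ slope of the boundary of $\sV_n^{(a)}$ matches the slope built into $\lrs$, so truncation only costs a constant) is a point the paper leaves implicit.

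There is a genuine miscount in (6), however. You claim $\norm{[\vA(\sV_n)]^2}_F^2\asymp n^8\log n$, hence $\norm{[\vA(\sV_n)]^2}_F\asymp n^4\sqrt{\log n}$. But your own upper bound already rules this out: $\norm{[\vA]^2}_F\le \norm{[\vA]}\cdot\norm{[\vA]}_F=O(n)\cdot O(n\sqrt{\log n})=O(n^2\sqrt{\log n})$, which is smaller by a factor of $n^2$. The correct order is $\norm{[\vA(\sV_n)]^2}_F^2\asymp n^4\log n$, i.e.\ $\norm{[\vA(\sV_n)]^2}_F\asymp n^2\sqrt{\log n}$; the lemma as stated contains a typo (the displayed exponent should read $\norm{[\vA(\sV_n)]^2}_F^2$), but the normalized conclusion $\norm{[\bar\vA(\sV_n)]^2}_F\asymp(\log n)^{-1/2}$ is correct and is what the paper actually uses downstream. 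The source of your overcount is presumably treating the entries of $[\vA]^2$ as if they were of size $\asymp n$; in fact $[\vA^2]_{\vu,\vv}=\sum_{\vw\in\llb\vu\rrb\cap\llb\vv\rrb}|u_1-w_1|^{-1}|v_1-w_1|^{-1}$ is only $O(1)$ for typical $\vu,\vv$ (the $\asymp\min(|i-k|,|j-k|)$ count of valid $w_2$ cancels one of the two $|i-k|^{-1},|j-k|^{-1}$ factors, and the remaining sum over $k$ is at most logarithmic). The paper obtains the matching lower bound by squaring the explicit asymptotics $I_{1,1}^{i,j}\asymp\log\frac{j}{j-i}$ from part (4) and summing; your ``middle third'' heuristic would need to be sharpened accordingly.
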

\corO{Note that} 
Lemma \ref{A:properties} describes properties of the matrix $[\vA(\sV_n)]$. 

\corS{
\subsubsection{A weaker version of Theorem \ref{UnknownInitial}} \label{sec:weak bd}
\begin{prop} \label{weaker version}
In the set up of Theorem \ref{UnknownInitial}, if $\mu_n(\log n)^{1/4}\to\infty$ as $n\to\infty$, there is a sequence of asymptotically powerful tests for the hypothesis testing problem $H_0$ versus $H_1$.
\end{prop}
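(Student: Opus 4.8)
The plan is to separate $H_0$ from the composite alternative $H_1$ by applying a single threshold to the normalized quadratic statistic $S_n := \vX_n^T [\bar\vA(\sV_n)]\,\vX_n$, and to verify that the threshold works by a second-moment (Chebyshev) argument. Write $\vA := [\vA(\sV_n)]$ and $\bar\vA := [\bar\vA(\sV_n)]$, and recall that $\vA$ (hence $\bar\vA$) is symmetric with zero diagonal, since $\vx\lrs\vy$ forces $x_1\neq y_1$, and that $\norm{\bar\vA}_F = 1/\sqrt2$ by the normalization in the definition of $[\bar\vA(\sV_n)]$. The heuristic is that $\bar\vA$ pairs up nodes that could lie on a common left-to-right crossing with a weight decaying in their horizontal separation, so that along a genuine anomalous path the off-diagonal products $X_\vx X_\vy$ accumulate a coherent positive drift absent under $H_0$.

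First I would compute the first two moments under $H_0$. Since $\vX_n\sim N(\vzero,I)$ and $\bar\vA$ has zero diagonal, $\E_0[S_n] = \mathrm{Tr}(\bar\vA) = 0$, while the classical identity $\var(\vZ^T M\vZ) = 2\,\mathrm{Tr}(M^2) = 2\norm{M}_F^2$ for symmetric $M$ and $\vZ\sim N(\vzero,I)$ gives $\var_0(S_n) = 2\norm{\bar\vA}_F^2 = 1$. Next, I would fix $\vpi\in\sP(\sG_n^{(a)})$ and, under $H_{1,\vpi}$, write $\vX_n = \mu_n\ind_\vpi + \vZ$ with $\vZ\sim N(\vzero,I)$. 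Expanding $S_n = \mu_n^2\,\ind_\vpi^T\bar\vA\ind_\vpi + 2\mu_n\,\ind_\vpi^T\bar\vA\vZ + \vZ^T\bar\vA\vZ$ and using that odd Gaussian moments vanish (so the linear and the quadratic noise terms are uncorrelated and $\E[\vZ^T\bar\vA\vZ]=0$), I obtain $\E_{1,\vpi}[S_n] = \mu_n^2\,\ind_\vpi^T\bar\vA\ind_\vpi$ and $\var_{1,\vpi}(S_n) = 4\mu_n^2\norm{\bar\vA\ind_\vpi}^2 + 2\norm{\bar\vA}_F^2 = 4\mu_n^2\,\ind_\vpi^T\bar\vA^2\ind_\vpi + 1$.

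Now I would feed in Lemma \ref{A:properties}. By parts (1) and (3), $\ind_\vpi^T\bar\vA\ind_\vpi = \ind_\vpi^T\vA\ind_\vpi/(\sqrt2\,\norm{\vA}_F) \asymp n\log n/(n\sqrt{\log n}) = \sqrt{\log n}$, uniformly in $\vpi$, so $\E_{1,\vpi}[S_n]\asymp\mu_n^2\sqrt{\log n}$; and by parts (1) and (4), $\ind_\vpi^T\bar\vA^2\ind_\vpi = \ind_\vpi^T\vA^2\ind_\vpi/(2\norm{\vA}_F^2) \asymp n^2/(n^2\log n) = 1/\log n$, so $\var_{1,\vpi}(S_n)\asymp 1+\mu_n^2/\log n$, again uniformly in $\vpi$. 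I would then set $\tau_n := \tfrac12\inf_{\vpi}\E_{1,\vpi}[S_n]\asymp\mu_n^2\sqrt{\log n}$ and take $T_n := \ind\{S_n>\tau_n\}$. Chebyshev's inequality gives $\pr_0(T_n=1)\le\var_0(S_n)/\tau_n^2 = 1/\tau_n^2\to 0$, since $\tau_n^2\asymp\mu_n^4\log n = \big(\mu_n(\log n)^{1/4}\big)^4\to\infty$; and for every $\vpi$, $\pr_{1,\vpi}(T_n=0)\le\var_{1,\vpi}(S_n)/\big(\tfrac12\E_{1,\vpi}[S_n]\big)^2\le C\big(1+\mu_n^2/\log n\big)/(\mu_n^4\log n)$, which tends to $0$ uniformly in $\vpi$ because both $\mu_n^4\log n\to\infty$ and $\mu_n^2(\log n)^2 = \big(\mu_n(\log n)^{1/4}\big)^2(\log n)^{3/2}\to\infty$. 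Hence $\gc(T_n)\to 0$, as required.

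I do not expect a serious obstacle inside this proposition: once Lemma \ref{A:properties} is in hand the proof is essentially the moment bookkeeping above, and the only mild subtlety is that every estimate must be uniform over the exponentially large family $\sP(\sG_n^{(a)})$, which is automatic because Lemma \ref{A:properties} is stated uniformly in $\vpi$. The genuine difficulty of the paper lies beyond this point: the exponent $1/4$ here is a true limitation of the second-moment analysis of the bare quadratic form, whose fluctuations overwhelm the signal once $\mu_n\asymp(\log n)^{-1/4}$, and lowering the detection threshold to the sharp $(\log n)^{-1/2}$ of Theorem \ref{UnknownInitial} requires the multi-scale bootstrap and renormalization carried out in Section \ref{full proof}.
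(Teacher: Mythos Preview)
Your proof is correct and follows essentially the same route as the paper: you use the same quadratic statistic (normalized as $\bar\vA$ rather than $\vA$, a harmless rescaling), compute its mean and variance under $H_0$ and $H_{1,\vpi}$ via Lemma~\ref{A:properties}(1),(3),(4), and conclude by Chebyshev, obtaining the identical error bounds $c/(\mu_n^4\log n)+c/(\mu_n^2\log^2 n)$. The only cosmetic difference is that the paper fixes the explicit threshold $\mu_n^2 n\log n/2$ for the unnormalized form whereas you take $\tau_n=\tfrac12\inf_\vpi\E_{1,\vpi}[S_n]$; both are of the required order $\mu_n^2\sqrt{\log n}$ after normalization.
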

 }
 \corS{
 \begin{proof}
 Let $\vZ_n$ be 
 a $|\sV_n| \times 1$ column vector consisting of i.i.d. $N(0,1)$ random variables. Consider the quadratic form $\vZ_n^T [\vA(\sV_n)] \vZ_n$, where  $[\vA(\sV_n)]$ is the matrix defined in \eqref{A:def}. Since $[\vA(\sV_n)]$ has zero diagonal entries, each summand of  $\vZ_n^T [\vA(\sV_n)] \vZ_n$ and $Z_i(\vZ_n^T [\vA(\sV_n)] \vZ_n), i\in[|\sV_n|],$ has mean 0, as all of them are  product of independent random variables having mean 0. So 
 \beq \label{Zprop1} \E \vZ_n^T[\vA(\sV_n)]\vZ_n=0 \quad \text{ and } \quad \E[(\vZ_n^T[\vA(\sV_n)]\vZ_n)\vZ_n]=\mathbf 0.\eeq
 Also, noting that the summands $\vZ_n^T[\vA(\sV_n)]\vZ_n$  are uncorrelated, and using (1) of Lemma \ref{A:properties},
\beq \label{Zprop2}
\E[(\vZ_n^T[\vA(\sV_n)]\vZ_n)^2]= 4\sum_{(i,u)  \in \sV_n} \;\;\sum_{(j,v)\in \sV_n: (i,u) \precsim (j,v)}    (j-i)^{-2}  
 =  2\norm{[\vA(\sV_n)]}_F^2 \asymp n^2\log n.  \eeq
 Now, using the partial order $\preccurlyeq$, we order the random variables $\{X_\vv: \vv \in \sV_n\}$ attached to the nodes to have the $|\sV_n| \times 1$ column vector $\vX_n$. Define the quadratic form $Q_n:=\vX_n^T [\vA(\sV_n)] \vX_n$ and the test $T_n:=\mathbf 1_{\{Q_n>\mu_n^2 n\log n/2\}}$. In order to compute $\gc(T_n)$ note that 
\beqax 
 \vX_n &\eqd& \begin{cases} \vZ_n  & \text{ under $H_0$}\\ \vZ_n +\mu_n\mathds 1_{\vpi} & \text{ under $H_1$} \end{cases}, \text{  so using \eqref{Zprop1}, \eqref{Zprop2} and Lemma \ref{A:properties} we get}\\
 \E_0 Q_n &=&\E \vZ_n^T[\vA(\sV_n)]\vZ_n=0, \\
 \E_{1,n} Q_n &=& \E \vZ_n^T[\vA(\sV_n)]\vZ_n+ 2\mu_n\E\mathds 1_{\vpi}^T[\vA(\sV_n)]\vZ_n+\mu_n^2 \mathds 1_{\vpi}^T [\vA(\sV_n)]\mathds 1_{\vpi} \\
 &=&\mu_n^2 \mathds 1_{\vpi}^T [\vA(\sV_n)]\mathds 1_{\vpi} \asymp \mu_n^2 n\log n,\\
  \var_0 Q_n &=&\E [(vZ_n^T[\vA(\sV_n)]\vZ_n)^2]\asymp n^2\log n, \\
 \var_{1,n} Q_n &=& \var(\vZ_n^T[\vA(\sV_n)]\vZ_n+ 2\mu_n\mathds 1_{\vpi}^T[\vA(\sV_n)]\vZ_n) = \var(\vZ_n^T[\vA(\sV_n)]\vZ_n)  \\
 && + \var(2\mu_n\mathds 1_{\vpi}^T[\vA(\sV_n)]\vZ_n) + 2\cov(\vZ_n^T[\vA(\sV_n)]\vZ_n, 2\mu_n\mathds 1_{\vpi}^T[\vA(\sV_n)]\vZ_n)  \\
 &=& \E[(\vZ_n^T[\vA(\sV_n)]\vZ_n) ^2] + 4\mu_n^2 \mathds 1_{\vpi}^T[\vA(\sV_n)]\E(\vZ_n \vZ_n^T)[\vA(\sV_n)]\mathds 1_{\vpi}  \\
 && + 4\mu_n \mathds 1_{\vpi}^T[\vA(\sV_n)]\E(\vZ_n \vZ_n^T[\vA(\sV_n)]\vZ_n)
 =  \E[(\vZ_n^T[\vA(\sV_n)]\vZ_n) ^2] + 4\mu_n^2 \mathds 1_{\vpi}^T[\vA(\sV_n)]^2\mathds 1_{\vpi}  \\
 &\asymp& n^2\log n + \mu_n^2  n^2.
 \eeqax
 Using the above estimates and Chebychev inequality,
 \beqax
 \pr_0(T_n=1)
 &\le & \pr_0\left(|Q_n-\E_0Q_n|>\frac 12 \mu_n^2n\log n\right) \\
 &\le &  4\frac{\var_0(Q_n)}{(\mu_n^2 n\log n)^2} 
 \le c \frac{n^2\log n}{(\mu_n^2 n\log n)^2} =\frac{c}{\mu_n^4\log n}, \text{ and } \\
\pr_{1,n}(T_n=0)
 &\le & \pr_{1,n}\left(|Q_n-\E_{1,n}Q_n|\ge \frac 12 \mu_n^2n\log n\right) \\
 &\le & 4\frac{\var_{1,n}(Q_n)}{(\mu_n^2 n\log n)^2} 
 \le c \frac{n^2\log n+\mu_n^2 n^2}{(\mu_n^2 n\log n)^2} =\frac{c}{\mu_n^4\log n}+\frac{c}{\mu_n^2\log^2 n}  
 \eeqax
 for some constant $c$. Since the upper bounds in the above display are $o(1)$, we see that $\lim_{n\to\infty}\gc(T_n)=0$. This competes the proof.
  \end{proof}
 }
 \corS{
 Proposition \ref{weaker version} gives a weak upper bound for the detectability threshold.  In order to improve this bound,   we will use a renormalization argument. In order to employ our renormalization argument, we need to generalize the detection problem described in the introduction.  We define the necessary generalization step by step in the following section.  
 }

\subsubsection{Generalized detection problem}
Recall from Section \ref{formulation}  that $\sH_i=i+2\dZ$ denotes the $i$-th hyperplane of $\dL^2$. We extend the notion of a hyperplane by including unordered pairs of consecutive nodes from the same hyperplane. For a graph $\sG_n=(\sV_n,\sE_n)$,  define the associated {\it generalized hyperplanes} as follows.
\[ \wt\sH_i(\sG_n) := \left\{\{\vv\}: \vv\in\sV_n \text{ and }  v_1=i\right\} \cup \left\{\{\vu,\vv\}: \vu, \vv\in\sV_n,  v_1=u_1=i \text{ and } |v_2-u_2|=2\right\}, i\in[n]_0.\]
So each $\wt\sH_i(\sG_n)$ consists of singletons and doubletons.  We extend the definition of neighboring relationship $\sim$ defined in \eqref{simdef} to a new relation $\ws$ on $\cup_{i\in[n]_0} \wt\sH_i(\sG_n)$.  We say that $A\in\wt\sH_i(\sG_n)$ and $B \in \wt\sH_j(\sG_n)$ are neighbors \ie
\[  A\ws B, \text{ if $ a\sim b$ for some  $a\in A$ and $b\in B$. }\]   
In the same spirit, a {\it generalized path} $\vPi$ on $\sG_n$  will be union of finite sequences of successive neighbors from $\cup_{i\in[n]_0} \wt\sH_i(\sG_n)$. A generalized path may be  incomplete in the sense that it may not intersect all hyperplanes. Define
\beqax 
\wt\sP(\sG_n) :=\left\{\vPi=\la\vPi_0,  \ldots, \vPi_{n-1}\ra: \right.&& \text{ for all } i\in[n]_0 \text{ either } \vPi_i=\emptyset \text{ or }  \vPi_i\in \wt\sH_i(\sG_n),   \\
&& \left.\text{ and } \vPi_i\ws\vPi_{i-1} \text{ whenever } \vPi_i, \vPi_{i-1} \ne\emptyset \right\}. 
 \eeqax
 See Figure \ref{generalized path} for a picture of such a generalized path for a graph in $\cG_n$.
 \begin{figure}
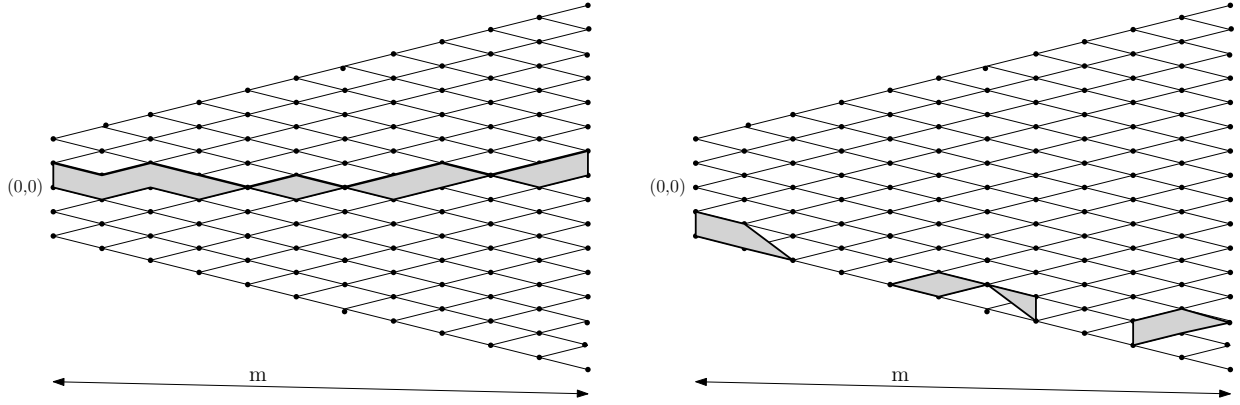

    \centering
    \includegraphics[height=5.5cm,page=2]{DetectionDiagram.pdf}
  \hfill \includegraphics[height=5.5cm,page=10]{DetectionDiagram.pdf}
     \caption{This is a picture of a graph in $\cGU_n$. The shaded region represents a generalized path in the two dimensional finite lattice with unknown initial location.}
    \label{generalized path}
\end{figure}
 In the generalized detection problem, we also assume that each node $\vv$ of the graph $\sG_n$  has a (observable) random variable $X_\vv$ associated with it, and the random variables $\{X_\vv\}$ are independent.   We will refer to the collection of random variables $(X_\vv, \vv\in\sV_n)$ as {\it observables}.   Suppose  $(\nu_{\vv,\vPi} \in\dR_+, \vv\in\sV_n, \vPi\in\wt\sP(\sG_n))$ is a collection of {\it signals} and $(Z_{\vv}, \vv\in\sV_n)$ and $(Y_{\vv,\vPi}, \vv\in\sV_n, \vPi\in\wt\sP(\sG_n))$, which we will refer to as {\it basic noise} and {\it additional noise} respectively, are two collections of (possibly unobservable) random variables satisfying the following properties.
  \begin{pro} \label{NoiseAssump}
  The noise variables satisfy the following.
  \begin{enumerate}
   \item The random variables $((Z_{\vv}, Y_{\vv,\vPi}), \vv\in\sV_n)$ are independent. 
 \item $Y_{\vv,\vPi}$ has mean zero and variance at most 1. 
\item For each $\vv\in\sV_n$ the random variables $Z_{\vv}$ and $Y_{\vv,\vPi}$ are uncorrelated, although they can be dependent.
 \end{enumerate}
 \end{pro}
Based on the signals, basic noise and additional noise variables as described above,  we consider the following two hypotheses.
 \begin{itemize}
 \item {\bf Null hypothesis $H_0$:} $X_\vv=Z_{\vv}$ for all $\vv\in\sV_n$.
 \item  {\bf Alternate \corO{(signal)}
   hypothesis $H_{1,n}$:} 
   \corO{it is a composite hypothesis}
   $\cup_{\vpi \in \wt\sP(\sG_n)} H_{1,\vpi}$, 
   where, \corO{under $H_{1,\vpi}$},
 \beq \label{H1Pudef}
   X_\vv  = \begin{cases} Z_{\vv}+Y_{\vv,\vPi}+\nu_{\vv,\vPi}  & \text{  if } \vv \in \cup_{i=0}^{n-1}\vPi_i  \\  Z_{\vv}  & \text{ otherwise}\end{cases}.  \eeq 
   \end{itemize}
 We refer to this hypothesis testing problem as ``generalized detection problem".
 
 The following proposition  summarizes the relevant properties of a certain quadratic statistic under the hypotheses of a  generalized detection problem. Proposition \ref{Quad} will play an important role in proving the upper bound in Theorem \ref{UnknownInitial}.

\begin{prop} \label{Quad}
  \corO{ Fix $a\geq 0$.}
  Let $\sG_{\corO{n}}^{\corO{(a)}}=
  (\sV_{\corO{n}}, \sE_{\corO{n}})$ be any graph in 
  $\cG_{\corO{n}}$,  $\{F_\vv\}_{\vv\in\sV_{\corO{n}}}$ 
  be any collection of distributions on $\dR$ having mean 0, 
  variance 1 and finite third moment. 
  Consider the generalized hypothesis testing problem  
  with observables $(X_\vv, \vv\in\sV_{\corO{n}})$, where the   basic noise variables $(Z_{\vv}, \vv\in\sV_{\corO{n}})$, 
  the additional noise variables $(Y_{\vv,\vPi}, \vv\in\sV_{\corO{n}}, 
  \vPi\in\wt\sP(\sG_{\corO{n}}))$ and the signals $(\nu_{\vv,\vPi}, 
  \vv\in\sV_{\corO{n}}, \vPi\in\wt\sP(\sG_{\corO{n}}))$ 
  satisfy Property \ref{NoiseAssump} and $Z_\vv$ has distribution $F_\vv$.
  Let $Q_{\corO{n}}:=Q_{\corO{n}}[[\bar\vA(\sV_{\corO{n}})], 
(X_\vv, \vv\in\sV_{\corO{n}})]$ and $W_{\corO{n}}:=W_{\corO{n}}[[
\bar\vA(\sV_{\corO{n}})], (Z_{\vv}, \vv\in\sV_{\corO{n}})]$ be the 
quadratic forms based on the observables and basic noise variables respectively, where $[\bar\vA(\cdot)]$ is as in \eqref{A:def}.  
 \begin{enumerate}
   \item Let $\gb_3:=\max_{\vv\in\sV_{\corO{n}}} \E(|Z_{\vv}|^3)$. \corO{Then
   there exists a constant $C=C(a)$ so that}
 \beq \label{Wmbd}
 \sup_{x\in\dR} |\pr(W_{\corO{n}} \le x) - \gF(x)| \le C \left[ (\log 
 \corO{n})^{-1/2}+\left(\frac{\gb_3}{\corO{n}}+\frac{\gb_3^2}{\corO{n}
 \log^{3/2}\corO{n}}\right)^{1/4}\right].
 \eeq
 \item Let $\bar \nu := \max_{\vv\in\vPi} \nu_{\vv,\vPi}$ . 
   There is a random variable $U_{\corO{n}}$, 
   satisfying $\E U_{\corO{n}}=\E U_{\corO{n}}W_{\corO{n}} =0$ and 
   $\E U_{\corO{n}}^2\le C\bar\nu^2/\log \corO{n}$, 
   such that
  \[
    Q_{\corO{n}} \eqd \begin{cases} W_{\corO{n}}  & \text{ under $H_0$} \\
   W_{\corO{n}} + U_{\corO{n}} + \nu(\sG_{\corO{n}}) & \text{ under } 
   H_{1,\vPi} \end{cases},    \text{ where } 
\nu(\sG_{\corO{n}}):=\sum_{\vv,\vv'\in\sV_{\corO{n}}} \nu_{\vv,\vPi} 
\nu_{\vv',\vPi} [\bar\vA(\sV_{\corO{n}})]_{\vv,\vv'}.
 \]
\item    For $I\subset[{\corO{n}}]$ let $\underline \nu_I := 
  \min_{i\in I}\max_{\vv\in\vPi_i} \nu_{\vv,\vPi}$. 
  There is a constant $c\corO{=c(a)}\in(0,1)$  such that
\beq \label{nuGm def}
  c\underline \nu_I^2 
\le \frac{\nu(\sG_{\corO{n}}) }{\sqrt{\log {\corO{n}}}}\le c^{-1} 
\bar \nu^2, \text{ 
  for any $I\subset\{i\in[{\corO{n}}]_0:\vPi_i\ne\emptyset\}$ satisfying 
$|I|\ge {\corO{n}}/2$.}
\eeq
\end{enumerate}
\end{prop}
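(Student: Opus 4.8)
The plan is to prove the three items essentially separately: item (1) is a quantitative central limit theorem for the centred, unit‑variance degenerate quadratic form $W_n=\vZ_n^{T}[\bar\vA(\sV_n)]\vZ_n$, while items (2) and (3) follow, after an exact algebraic expansion of $Q_n=\vX_n^{T}[\bar\vA(\sV_n)]\vX_n$, from the matrix estimates of Lemma~\ref{A:properties}.

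For item (1), note first that the zero diagonal of $[\bar\vA(\sV_n)]$ gives $\E W_n=0$, and the normalisation in \eqref{A:def} gives $\var W_n=2\norm{[\bar\vA(\sV_n)]}_F^{2}=1$, so $W_n$ is already standardized. I would argue in two steps. \emph{Step 1 (Lindeberg replacement).} Swap the $Z_\vv$ one vertex at a time for independent standard normal variables, producing a Gaussian quadratic form $\wt W_n$. Since $W_n$ is affine in each $Z_\vv$ (zero diagonal again), a third‑order Taylor expansion of a test function $h\in C^{3}$ in the swapped coordinate has its terms of order $0,1,2$ cancelled by the matching of the first two moments, leaving a remainder $\lesssim\gb_3\norm{h'''}_\infty\E|L_\vv|^{3}$ with $L_\vv:=2\sum_{\vv'}[\bar\vA(\sV_n)]_{\vv,\vv'}Z_{\vv'}$; a Rosenthal‑type bound gives $\E|L_\vv|^{3}\lesssim(\sum_{\vv'}[\bar\vA(\sV_n)]_{\vv,\vv'}^{2})^{3/2}+\gb_3\sum_{\vv'}|[\bar\vA(\sV_n)]_{\vv,\vv'}|^{3}$. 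Summing over $\vv$ and inserting $\norm{[\vA(\sV_n)]}_F\asymp n\sqrt{\log n}$ (Lemma~\ref{A:properties}(1)) together with the elementary estimates $\sum_\vv(\sum_{\vv'}[\bar\vA(\sV_n)]_{\vv,\vv'}^{2})^{3/2}\asymp n^{-1}$ and $\sum_{\vv,\vv'}|[\bar\vA(\sV_n)]_{\vv,\vv'}|^{3}\asymp n^{-1}(\log n)^{-3/2}$ yields $|\E h(W_n)-\E h(\wt W_n)|\lesssim\norm{h'''}_\infty(\gb_3/n+\gb_3^{2}/(n\log^{3/2}n))$; writing $\ind_{(-\infty,t]}$ as a $C^{3}$ mollification whose third derivative is $O(h^{-3})$ and balancing against the $O(h)$ density bound for $\wt W_n$ upgrades this to the Kolmogorov distance at the cost of a fourth power. \emph{Step 2 (Gaussian case).} Diagonalising, $\wt W_n\eqd\sum_i\lambda_i(g_i^{2}-1)$ with $g_i$ independent standard normal, $\sum_i\lambda_i^{2}=\tfrac12$ and $\max_i|\lambda_i|=\norm{[\bar\vA(\sV_n)]}$, so the Berry--Esseen theorem for weighted sums of centred chi‑squares gives $\sup_x|\pr(\wt W_n\le x)-\gF(x)|\lesssim\sum_i|\lambda_i|^{3}/(\sum_i\lambda_i^{2})^{3/2}\le\sqrt2\,\norm{[\bar\vA(\sV_n)]}=O((\log n)^{-1/2})$ by Lemma~\ref{A:properties}(2) (alternatively one may invoke the fourth‑cumulant estimate $\kappa_4(\wt W_n)=8\,\mathrm{Tr}([\bar\vA(\sV_n)]^{4})\le4\norm{[\bar\vA(\sV_n)]}^{2}$ on the second Wiener chaos). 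The triangle inequality then gives \eqref{Wmbd}.

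For item (2), under $H_{1,\vPi}$ the observable vector is $\vX_n=\vZ_n+\vY+\vs$, where $\vY$ and $\vs$ are the $|\sV_n|\times1$ vectors supported on $\cup_i\vPi_i$ with entries $Y_{\vv,\vPi}$ and $\nu_{\vv,\vPi}$. Expanding $Q_n=\vX_n^{T}[\bar\vA(\sV_n)]\vX_n$ produces $W_n$, the deterministic term $\nu(\sG_n)=\vs^{T}[\bar\vA(\sV_n)]\vs$, and $U_n:=2\vZ_n^{T}[\bar\vA(\sV_n)]\vY+2\vZ_n^{T}[\bar\vA(\sV_n)]\vs+\vY^{T}[\bar\vA(\sV_n)]\vY+2\vY^{T}[\bar\vA(\sV_n)]\vs$. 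Each contribution to $\E U_n$ and to $\E U_nW_n$ vanishes: the zero diagonal forces some vertex to occur exactly once in the expectation, whereupon Property~\ref{NoiseAssump} (independence of $(Z_\vv,Y_{\vv,\vPi})$ across $\vv$, mean zero, and uncorrelatedness of $Z_\vv$ and $Y_{\vv,\vPi}$) kills it. For $\E U_n^{2}$, Cauchy--Schwarz reduces matters to the second moment of each cross term; the dominant one is $\E(2\vZ_n^{T}[\bar\vA(\sV_n)]\vs)^{2}=4\,\vs^{T}[\bar\vA(\sV_n)]^{2}\vs\le4\bar\nu^{2}\ind_\vPi^{T}[\bar\vA(\sV_n)]^{2}\ind_\vPi\asymp\bar\nu^{2}/\log n$, using $\E[\vZ_n\vZ_n^{T}]=I$, the entrywise nonnegativity of $[\bar\vA(\sV_n)]^{2}$, $0\le\nu_{\vv,\vPi}\le\bar\nu$, Lemma~\ref{A:properties}(4), and the normalisation; the $\vY$‑terms are of lower order and are bounded the same way using $\var(Y_{\vv,\vPi})\le1$, Lemma~\ref{A:properties}(5), and the row estimates from Step~1. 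For item (3), $\nu(\sG_n)=\sum_{\vv,\vv'\in\vPi}\nu_{\vv,\vPi}\nu_{\vv',\vPi}[\bar\vA(\sV_n)]_{\vv,\vv'}$ is a sum of nonnegative terms; the upper bound $\nu(\sG_n)\le\bar\nu^{2}\ind_\vPi^{T}[\bar\vA(\sV_n)]\ind_\vPi\asymp\bar\nu^{2}\sqrt{\log n}$ follows from Lemma~\ref{A:properties}(3). For the lower bound, for each $i\in I$ pick $\vv_i\in\vPi_i$ with $\nu_{\vv_i,\vPi}=\max_{\vv\in\vPi_i}\nu_{\vv,\vPi}\ge\underline\nu_I$; discarding the other terms and using $[\bar\vA(\sV_n)]_{\vv_i,\vv_{i'}}=(\sqrt2\norm{[\vA(\sV_n)]}_F)^{-1}|i-i'|^{-1}\ind_{\vv_i\lrs\vv_{i'}}$ together with the fact that the second coordinate along a (generalized) path changes by a bounded amount per hyperplane — so $\vv_i\lrs\vv_{i'}$ for the relevant $i,i'\in I$ — gives $\nu(\sG_n)\ge\underline\nu_I^{2}(n\sqrt{\log n})^{-1}\sum_{i<i'\in I}|i-i'|^{-1}\gtrsim\underline\nu_I^{2}(n\sqrt{\log n})^{-1}|I|\log|I|\gtrsim\underline\nu_I^{2}\sqrt{\log n}$, using $|I|\ge n/2$.

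The main obstacle is item (1): producing the error exactly in the form \eqref{Wmbd} requires dovetailing the Lindeberg step — whose two contributions $\gb_3/n$ and $\gb_3^{2}/(n\log^{3/2}n)$ arise precisely from the two terms of the Rosenthal bound evaluated through the row‑ and entry‑norm estimates for $[\bar\vA(\sV_n)]$ — with the fourth‑power loss in passing from smooth test functions to Kolmogorov distance, and with the Gaussian Berry--Esseen bound that supplies the $(\log n)^{-1/2}$ via the operator‑norm estimate of Lemma~\ref{A:properties}(2). Items (2) and (3) are comparatively routine given Lemma~\ref{A:properties}; the points needing a little care there are the extension of items (3)--(5) of that lemma from directed paths to generalized paths, and the elementary observation used in the lower bound of (3) that a subset of the hyperplanes of relative density at least $1/2$ still carries a set of pairwise $\lrs$‑related path vertices whose reciprocal‑distance sum is of order $n\log n$.
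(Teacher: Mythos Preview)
Your proposal is correct and follows the same three-part architecture as the paper. Two points of comparison are worth recording.

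For item (1), the paper does \emph{not} carry out the Lindeberg swap by hand: it packages your two steps into two cited black boxes, namely the Rotar'--Shervashidze bound (Theorem~\ref{IND Quad Bd}) for the distance between $W_n$ and the Gaussian quadratic form $G_n([\bar\vA(\sV_n)])$, and the G\"otze--Tikhomirov bound (Theorem~\ref{IID Quad Bd}) for the distance between $G_n$ and $\gF$. Your Rosenthal estimates on the row norms $\sum_\vv(\sum_{\vv'}[\bar\vA]_{\vv,\vv'}^2)^{3/2}\asymp n^{-1}$ and on $\sum_{\vv,\vv'}|[\bar\vA]_{\vv,\vv'}|^3\asymp n^{-1}(\log n)^{-3/2}$ are exactly the quantities $s_\vv^3$ and $|a_{\vv,\vv'}|^3$ entering the $L$ of \eqref{LDelta def}, and your operator-norm Berry--Esseen for weighted chi-squares is precisely what Theorem~\ref{IID Quad Bd} provides; so the two routes coincide in substance, yours being self-contained and the paper's being shorter.

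For item (3), the one inequality you wave through --- that $\sum_{i<i'\in I}|i-i'|^{-1}\gtrsim |I|\log|I|$ when $I\subset[n]$ has $|I|\ge n/2$ --- is \emph{not} obvious (a direct inclusion--exclusion only works for density above $3/4$), and the paper isolates it as Lemma~\ref{nlog n bd}, proving it by first establishing the bound at density $4/5$ and then rescaling via $B_{n/2}(\alpha)=2B_n(\alpha/2)$. You correctly flag in your closing paragraph that the passage from directed to generalized paths (in particular, ensuring enough $\vv_i\lrs\vv_{i'}$ pairs) also needs care; the paper is equally terse on this point.
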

In the above set up, we will interpret $W_{\corO{n}}, U_{\corO{n}}$ and
$\nu(\sG_{\corO{n}})$ as the basic noise variable, 
additional noise variable  and signal, for the graph $\sG_{\corO{n}}$ 
respectively. 

The proof of Proposition \ref{Quad} uses the properties of
the matrix $\vA(\sV_n)$ contained in Lemma \ref{A:properties}. The proof is postponed to \textsection \ref{QuadProof}. The last ingredients that we need for proving Theorem \ref{UnknownInitial} are some distributional properties of quadratic forms, which we present in the following section.

\subsubsection{Moment bounds and Gaussian approximation for quadratic forms}
Let $(X_j, j\in[n])$ denote independent random
variables  such that $\E X_j = 0$ and $\E X_j^2= 1$ for all $j\ge 1$. Let $\vA
=\{a_{j,k}\}_{j,k=1}^n \in \dR^{n\times n}$ be such that 
\beq \label{QFproperty}
\vA \text{ is a symmetric matrix}, \quad a_{j,j}=0 \text{ for all  } j\in[n],  \quad  \text{ and }  \quad \textrm{Trace}(\vA^2) =\frac 12.\eeq 
Consider the quadratic forms
\beq \label{Qndef} 
Q_n[\vA, (X_j, j\in[n])] := \sum_{j,k=1}^n a_{j,k}X_jX_k \quad \text{ and } \quad  G_n(\vA) := \sum_{j,k=1}^n a_{j,k} Y_jY_k, \eeq
 where $(Y_j, j\in[n])$  are \iid with common distribution $N(0,1)$. 
 Keeping in mind that we will need upper bounds for the third moment of certain quadratic forms, we state  the following moment estimate.
\begin{thm}[Theorem 2 of \cite{W60}] \label{Quad Moment Bd}
If $\vA\in\dR^{n\times n}$ satisfies \eqref{QFproperty} and $(X_j, j\in[n])$  are  independent random variables having zero mean, then
\[ \E(|Q_n[\vA,(X_j, j\in[n])]|^s) \le 2^{5s/2} \gC(s/2+1/2) (\gC(s+1/2))^{1/2}\max_{j\in[n]}\E(X_j^{2s}). \]
\end{thm}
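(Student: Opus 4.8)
This is quoted from \cite{W60} and is used only as a black box, at $s=3$ and with no need for the precise value of the constant; nevertheless, here is how a self-contained argument would go. First I would expose the martingale structure: using the symmetry of $\vA$ and $a_{j,j}=0$, write $Q_n[\vA,(X_j,j\in[n])]=2\sum_{k} X_k L_k$ with $L_k:=\sum_{j<k} a_{j,k}X_j$, and observe that with respect to $\cF_k:=\gs(X_1,\dots,X_k)$ the terms $d_k:=2X_kL_k$ form a martingale difference sequence, since $L_k$ is $\cF_{k-1}$-measurable and $\E X_k=0$. Thus $(Q_m)_{m\le n}$ is a martingale, and the plan is to estimate $\E|Q_n|^s$ by a Burkholder--Rosenthal type moment inequality for martingales and then to recurse on the exponent.

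Applied to this martingale (for $s\ge 2$; small $s$ is easier), such an inequality gives $\E|Q_n|^s\le c_s\big(\E(\sum_k \E[d_k^2\mid\cF_{k-1}])^{s/2}+\E\sum_k|d_k|^s\big)$. The key point is that the predictable-quadratic-variation term is again (essentially) a quadratic-form moment at half the exponent: $\E[d_k^2\mid\cF_{k-1}]=4L_k^2\,\E X_k^2$, so $\sum_k\E[d_k^2\mid\cF_{k-1}]=4\sum_k L_k^2\,\E X_k^2$, and $\sum_k L_k^2=\vX^T\vB\vX$ for a positive semidefinite $\vB$ built from $\vA$ with $\mathrm{Trace}(\vB)=\tfrac12\mathrm{Trace}(\vA^2)=\tfrac14$; splitting $\vB$ into its diagonal (handled by convexity of $t\mapsto t^{s/2}$) and off-diagonal part (a zero-diagonal quadratic form, of exponent $s/2$), one gets a recursion relating the optimal constant at exponent $s$ to that at exponent $s/2$. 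Unrolling the recursion down to the base exponent and matching constants against the Gaussian benchmark $\E|N(0,1)|^p=2^{p/2}\gC(\tfrac{p+1}{2})/\sqrt{\pi}$ is what produces the factors $\gC(s/2+1/2)$ and $\gC(s+1/2)^{1/2}$. For the jump term, since $X_k$ and $L_k$ are independent one has $\E\sum_k|d_k|^s=2^s\sum_k\E|X_k|^s\,\E|L_k|^s$; bounding $\E|L_k|^s$ by Whittle's companion moment inequality for linear forms (Theorem 1 of \cite{W60}), using $\sum_k(\sum_{j<k}a_{j,k}^2)^{s/2}\le(\mathrm{Trace}(\vB))^{s/2}$, and finally absorbing $\E|X_k|^s\,\E|X_j|^s\le\max_v\E X_v^{2s}$ by Cauchy--Schwarz --- this last step is exactly where the $2s$-th moment enters the statement. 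An essentially equivalent route would bypass the martingale step by decoupling $\sum_{j\ne k}a_{j,k}X_jX_k$ into $\sum_{j,k}a_{j,k}X_jX_k'$ with $(X_k')$ an independent copy, conditioning on $(X_k')$ to treat the inner sum as a linear form in $(X_j)$, applying the linear-form bound, and then estimating $\E_{X'}\big((\vX')^T\vA^2\vX'\big)^{s/2}$, which is again a positive semidefinite quadratic-form moment.

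The only genuine difficulty is bookkeeping: pushing \emph{explicit, clean} constants through the recursion forces one to identify the extremal input distribution at each level (Gaussian versus a suitable two-point law, depending on $s$) and to carry out the $\gC$-function arithmetic, which is the content of \cite{W60}; the constant $2^{5s/2}$ is a convenient choice, far from optimal. For the present paper none of this sharpness is needed --- we invoke the bound only to control a single third moment and only need the constant to be finite --- so a soft proof via plain Burkholder's inequality together with decoupling, with unspecified constants, would already be sufficient. One should, however, keep track of the normalization $\mathrm{Trace}(\vA^2)=\tfrac12$ imposed in \eqref{QFproperty}, since that is what makes the right-hand side scale correctly.
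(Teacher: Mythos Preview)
The paper does not prove this theorem; it is quoted from \cite{W60} and used as stated, so there is no ``paper's own proof'' to compare against. Your identification of it as a black-box citation is correct, and your martingale/decoupling sketch is a legitimate route to a bound of this type (though not Whittle's original argument, which is a more direct moment comparison with the Gaussian case).

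Two corrections to your framing, however. First, the theorem is \emph{not} applied only at $s=3$: in the proof of Theorem~\ref{UnknownInitial} it is invoked iteratively through the chain $\gb_s^{(k)}\le C_s\,\gb_{2s}^{(k+1)}$, so it is used at $s=3,6,12,\dots,3\cdot 2^{K-1}$. Second, and relatedly, the growth of the constant in $s$ \emph{does} matter. The argument needs $\gb_3^{(1)}\le\exp(CK^22^K)$, which comes from $\log C_s=O(s\log s)$ via Stirling, and this is then beaten by the factor $n^{(1-\eps_1)/2}$. A ``soft'' Burkholder--decoupling proof with unspecified constants would not suffice unless you separately check that the implicit constants grow at most like $\exp(O(s\log s))$; so while the exact numerical form $2^{5s/2}\gC(s/2+1/2)\gC(s+1/2)^{1/2}$ is not essential, an explicit bound of that shape is.
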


Other than moments, we will also need error bounds for Gaussian approximation of quadratic forms. In this context,
Rotar' \cite{R73} proved that under sufficiently weak conditions on the matrix
$\vA$ and for large $n$, the distribution of $Q_n[\vA,(X_j, j\in[n])]$ is close to that of $G_n(\vA)$.
Gamkrelidze and Rotar' \cite{GR78} obtained bounds for the error of this approximation,
which were improved by Rotar' and Shervashidze \cite{RS85}. Here is their result. 
\begin{align}
  \text{Let }& F_j(x)=\pr(X_j\le x),  \quad \gF(x)=\int_{-\infty}^x\frac{1}{\sqrt{2\pi}} \corO{e^{-y^2/2} \; dy,} \quad \nu_j := 3\int_{-\infty}^\infty x^2|F_j(x)-\gF(x)|\; dx, \notag \\
& s_j^2 := \sum_{k=1}^n a_{n,j,k}^2 , \quad L := \sum_{j=1}^n \nu_js_j^3 + \sum_{j, k=1}^n \nu_j \nu_k |a_{n,j,k}]^3, \quad \gD := \textrm{Trace}(\vA^4).   \label{LDelta def}
\end{align}
\begin{thm}[See  \cite{RS85}] \label{IND Quad Bd}
Assume that \eqref{QFproperty} holds and $L, \gD$ are as in \eqref{LDelta def}. If $\gD<1/2$,  then  there is an absolute constant $C$ such that
\[ \sup_{x\in\dR} \left|\pr(Q_n[\vA,(X_j, j\in[n])] \le x) - \pr(G_n(\vA) \le x)\right| \le C(1-\log(1-2\gD))^{3/4}L^{1/4}.  \]
\end{thm}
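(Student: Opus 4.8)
The plan is to prove this Berry--Esseen comparison between $Q_n:=Q_n[\vA,(X_j,j\in[n])]$ and its Gaussian analogue $G_n:=G_n(\vA)$ by combining the Esseen smoothing inequality with a coordinate-by-coordinate replacement argument carried out at the level of characteristic functions (the approach underlying \cite{R73,GR78,RS85}); a Lindeberg-type argument with smooth test functions would be an equivalent alternative. As a preliminary I would diagonalize the symmetric matrix $\vA=O\,\mathrm{Diag}(\lambda_1,\dots,\lambda_n)\,O^{T}$; then \eqref{QFproperty} becomes $\sum_i\lambda_i=\mathrm{Trace}(\vA)=0$, $\sum_i\lambda_i^2=\mathrm{Trace}(\vA^2)=1/2$ and $\sum_i\lambda_i^4=\mathrm{Trace}(\vA^4)=\gD$, so that $\norm{\vA}=\max_i|\lambda_i|\le\gD^{1/4}$, $G_n\eqd\sum_i\lambda_i\eta_i^2$ with $\eta_i$ \iid $N(0,1)$, and $\phi_{G_n}(t):=\E e^{itG_n}=\prod_i(1-2it\lambda_i)^{-1/2}$.

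\textbf{Step 1: smoothing.}
By the Esseen smoothing lemma, for every $T>0$ I would bound
\[
\sup_{x\in\dR}\big|\pr(Q_n\le x)-\pr(G_n\le x)\big|
\;\le\; C\int_{-T}^{T}\frac{|\phi_{Q_n}(t)-\phi_{G_n}(t)|}{|t|}\,dt
\;+\; C\sup_{x\in\dR}\pr\!\big(|G_n-x|\le 1/T\big),
\]
and handle the last term, an anti-concentration estimate for the Gaussian quadratic form $\sum_i\lambda_i\eta_i^2$, as follows: since $\sum_i\lambda_i^2=1/2$ is bounded below while $\max_i\lambda_i^2\le\gD^{1/2}$, there must be at least $\asymp\gD^{-1/2}$ eigenvalues of comparable magnitude, and standard density estimates for sums of independent $\chi^2$-type variables then give $\sup_x\pr(|G_n-x|\le\eps)\le C\eps\,(1+|\log\gD|)^{1/2}$. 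Propagating this through the smoothing bound and optimising $T$ is what yields the factor $(1-\log(1-2\gD))^{3/4}$.

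\textbf{Step 2: coordinate replacement and reassembly.}
I would interpolate between $Q_n$ and $G_n$ by swapping $X_i$ for $\eta_i$ one index at a time: put $\phi^{(m)}(t)=\E\exp\!\big(it\sum_{j,k}a_{j,k}\xi^{(m)}_j\xi^{(m)}_k\big)$ with $\xi^{(m)}_i=\eta_i$ for $i\le m$ and $\xi^{(m)}_i=X_i$ for $i>m$, so $\phi^{(0)}=\phi_{Q_n}$ and $\phi^{(n)}=\phi_{G_n}$. Fixing $m$ and conditioning on $\{\xi_i:i\ne m\}$, the quadratic form equals $R_m+\xi_m L_m$ with $L_m=2\sum_{k\ne m}a_{m,k}\xi_k$, and $R_m,L_m$ do not involve the $m$-th coordinate --- here the hypothesis $a_{m,m}=0$ is essential, as it removes the $\xi_m^2$ term. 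Hence $\phi^{(m-1)}(t)-\phi^{(m)}(t)=\E\big[e^{itR_m}\big(\phi_{X_m}(tL_m)-\phi_{\eta_m}(tL_m)\big)\big]$, whose modulus is at most $\E\big|\phi_{X_m}(tL_m)-\phi_{\eta_m}(tL_m)\big|$. Starting from $\phi_{X_m}(s)-\phi_{\eta_m}(s)=-is\int e^{isx}(F_m(x)-\gF(x))\,dx$, subtracting the contribution of the matching first two moments, and integrating by parts once more, I expect a bound of the shape $|\phi_{X_m}(s)-\phi_{\eta_m}(s)|\le C\min(|s|^3,s^2)\,\nu_m$; this is the step that makes the pseudo-moment $\nu_m$ (a weighted $L^1$-distance from $F_m$ to $\gF$, which vanishes as $F_m\to\gF$ and is in general much smaller than an absolute third moment) the governing quantity, so that the bound remains sharp for distributions close to Gaussian. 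Telescoping over $m$, inserting the per-step bound, integrating against $dt/|t|$ on $[-T,T]$ while using the decay in $t$ of the surviving partial products (the Gaussian factors are $|1-2it\lambda_i|^{-1/2}$, the $X$-factors are $\le 1$ in modulus after a preliminary truncation), and controlling the moments of the linear forms $L_m$ via a Whittle/Rosenthal-type inequality (Theorem~\ref{Quad Moment Bd}), I would reassemble --- up to absolute constants --- exactly the combination $L=\sum_j\nu_j s_j^3+\sum_{j,k}\nu_j\nu_k|a_{j,k}|^3$ from \eqref{LDelta def}, with the power $L^{1/4}$ coming from balancing the smoothing parameter $T$.

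\textbf{Main obstacle.}
The hard part will be Step 1: obtaining the anti-concentration estimate for $G_n$ with the \emph{sharp} logarithmic dependence on $\gD=\mathrm{Trace}(\vA^4)$, and, correspondingly in Step 2, tracking the decay of the characteristic-function products in terms of $\gD$ rather than of the cruder quantity $\norm{\vA}$; a naive density bound loses this and gives a weaker exponent. A secondary, purely technical difficulty is that no finite absolute third moments are assumed, so every error term must be routed through the pseudo-moments $\nu_j$, which forces the truncated integration-by-parts estimate of Step 2 in place of a direct third-moment Taylor expansion.
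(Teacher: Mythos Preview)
This theorem is not proved in the paper at all: it is quoted from Rotar' and Shervashidze \cite{RS85} and used as a black box (its only appearance is inside the proof of Proposition~\ref{Quad Kolmogorov Bd}, where it is simply invoked). There is therefore no in-paper argument to compare your proposal against.

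For what it is worth, your outline does track the strategy of the original \cite{R73,GR78,RS85} line of work --- Esseen smoothing combined with a one-coordinate-at-a-time Lindeberg replacement carried out on characteristic functions --- and you have correctly isolated the two structural points that force the quantities $\nu_j$ and $\gD$ into the final bound: the vanishing diagonal $a_{mm}=0$, which linearises the dependence on the coordinate being swapped, and the anti-concentration of the Gaussian quadratic form $G_n$ governed by $\mathrm{Trace}(\vA^4)$. The difficulties you flag --- obtaining the sharp logarithmic dependence on $\gD$ in the anti-concentration step, and routing every error through the pseudo-moments $\nu_j$ in the absence of a third-moment assumption --- are indeed the delicate parts of the Rotar'--Shervashidze proof, and a full reconstruction would require substantially more detail than your sketch provides; but nothing in the plan is wrong-headed.
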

Later,  G\"otze and Tikhomirov \cite{GT99} obtained improved bound for the Kolmogorov distance between normalized quadratic forms of \iid \corO{Gaussian}
random variables and the
Gaussian distribution.
\begin{thm}[Theorem 1 of \cite{GT99}] \label{IID Quad Bd}
Assume that \eqref{QFproperty} holds and let $G_n(\cdot)$ be as in \eqref{Qndef}. Then  
\[ \sup_{x\in\dR} \left|\pr(G_n(\vA) \le x) - \gF(x)\right| \le  C \norm{\vA} \quad \text{  for some absolute constant $C$}. \]
\end{thm}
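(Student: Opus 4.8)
The plan is to exploit the fact that $G_n(\vA)$ is a quadratic form in \emph{i.i.d.\ standard Gaussians}, which lets us diagonalise it in distribution. Since $\vA$ is symmetric, I would write its spectral decomposition $\vA=\sum_{i=1}^n\lambda_i\vu_i\vu_i^T$ with $\{\vu_i\}$ an orthonormal eigenbasis. By rotational invariance of the Gaussian vector $(Y_j)_j$, the coordinates $\xi_i:=\vu_i^T(Y_j)_j$ are again i.i.d.\ $N(0,1)$, so $G_n(\vA)\eqd\sum_{i=1}^n\lambda_i\xi_i^2$. The hypotheses \eqref{QFproperty} translate into $\sum_i\lambda_i=\mathrm{Trace}(\vA)=0$ (zero diagonal) and $\sum_i\lambda_i^2=\mathrm{Trace}(\vA^2)=\tfrac12$, while $\norm{\vA}=\max_i|\lambda_i|$. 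Using $\sum_i\lambda_i=0$ to recenter each term,
\[ G_n(\vA)\eqd \sum_{i=1}^n\lambda_i(\xi_i^2-1)=:\sum_{i=1}^n W_i, \]
a sum of independent mean-zero random variables with $\sum_i\E W_i^2=2\sum_i\lambda_i^2=1$.

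Next I would invoke the classical Berry--Esseen theorem for sums of independent, not necessarily identically distributed, summands: since $\sum_i\var(W_i)=1$, there is an absolute constant $C_0$ with
\[ \sup_{x\in\dR}\Bigl|\pr\bigl(\textstyle\sum_{i} W_i\le x\bigr)-\gF(x)\Bigr|\le C_0\sum_{i=1}^n\E|W_i|^3. \]
Here $\E|W_i|^3=|\lambda_i|^3\,\E|\xi^2-1|^3=c_3|\lambda_i|^3$, where $c_3:=\E|\xi^2-1|^3<\infty$ is an absolute constant ($\xi\sim N(0,1)$). Consequently
\[ \sum_{i=1}^n\E|W_i|^3=c_3\sum_{i=1}^n|\lambda_i|^3\le c_3\Bigl(\max_i|\lambda_i|\Bigr)\sum_{i=1}^n\lambda_i^2=\tfrac{c_3}{2}\,\norm{\vA}, \]
and combining the last three displays yields $\sup_x|\pr(G_n(\vA)\le x)-\gF(x)|\le (C_0c_3/2)\norm{\vA}$, i.e.\ the claim with $C=C_0c_3/2$.

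The main point to stress is that in this Gaussian special case there is no serious obstacle: the only real idea is that rotational invariance collapses the quadratic form to a weighted sum of centred $\chi^2_1$ variables, after which it is a one-line Berry--Esseen estimate, the $\chi^2$ structure supplying uniformly bounded third absolute moments. The only thing to keep an eye on is that the bound is vacuous unless $\norm{\vA}$ is small, i.e.\ unless the spectrum of $\vA$ is spread over many comparable eigenvalues — this is automatically consistent, since $\sum_i\lambda_i^2=\tfrac12$ forces $\norm{\vA}\le 1/\sqrt2$. The deeper content of \cite{GT99} (removing Gaussianity, or optimising the constant and the dependence on the spectral profile) is not needed here, since the paper applies the statement only to the Gaussian noise vector $\vZ_n$.
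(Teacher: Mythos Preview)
Your argument is correct. Note, however, that the paper does not give its own proof of this statement at all: it is quoted verbatim as Theorem~1 of \cite{GT99} and used as a black box (in the proof of Proposition~\ref{Quad Kolmogorov Bd}, via \eqref{gaussianbd}). So there is nothing in the paper to compare your approach against.

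What you have written is in fact a clean, self-contained proof that bypasses the need to cite \cite{GT99} in the Gaussian case. The reduction $G_n(\vA)\eqd\sum_i\lambda_i(\xi_i^2-1)$ via rotational invariance is standard, the moment bookkeeping ($\sum_i\lambda_i=\textrm{Trace}(\vA)=0$, $\sum_i\lambda_i^2=\textrm{Trace}(\vA^2)=\tfrac12$, $\norm{\vA}=\max_i|\lambda_i|$) is correct, and the Lindeberg--Feller form of Berry--Esseen then gives exactly $C\norm{\vA}$ after the inequality $\sum_i|\lambda_i|^3\le\norm{\vA}\sum_i\lambda_i^2$. Your closing remark is also on point: the result in \cite{GT99} is considerably more general (non-Gaussian entries, sharper dependence on the spectrum), but the paper only ever applies this theorem with Gaussian $(Y_j)$, so your elementary route would suffice for every use made of it here.
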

We will apply Theorem \ref{IND Quad Bd} and \ref{IID Quad Bd} 
\corO{to bound the 
  Kolmogorov distance between the
quadratic form }
  $Q_n[[\bar\vA(\sV_n)],(U_\vv, \vv\in\sV_n)]$, where $[\bar\vA(\cdot)]$ is defined in \eqref{A:def},   and the
Gaussian distribution. 

\begin{prop} \label{Quad Kolmogorov Bd}
  \corO{Fix $a\geq 0$. For any graph $\sG_n:=
    \sG_n^{(a)}=(\sV_n,\sE_n)\in\cG_n$} 
  and  
any collection of random variables $(U_\vv, \vv\in\sV_n)$  
having zero mean, unit variance and finite third moment, if 
$[\bar\vA(\sV_n)]$ is the matrix as defined in \eqref{A:def}, 
$Q_n=Q_n[[\bar\vA(\sV_n)], (U_\vv, \vv\in\sV_n)]$ 
is the quadratic form as defined in \eqref{Qndef} and 
$\gb_3 :=\max_{\vv\in\sV_n} \E(|U_\vv|^3)$, then 
\[ \sup_{x\in\dR} \left|\pr(Q_n \le x) - \gF(x)\right| \le C\left[ (\log n)^{-1/2}+
\left(\frac{\gb_3}{n}+\frac{\gb_3^2}{n\log^{3/2}n}\right)^{1/4}\right]\,,\]
\corO{where $C=C(a)$ is an absolute constant.}
\end{prop}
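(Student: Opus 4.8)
The plan is to control the Kolmogorov distance between $Q_n$ and the standard normal by inserting the auxiliary Gaussian quadratic form $G_n([\bar\vA(\sV_n)])$ of \eqref{Qndef} and using the triangle inequality,
\[
\sup_{x\in\dR}|\pr(Q_n\le x)-\gF(x)|\le \sup_{x\in\dR}\bigl|\pr(Q_n\le x)-\pr(G_n([\bar\vA(\sV_n)])\le x)\bigr|+\sup_{x\in\dR}\bigl|\pr(G_n([\bar\vA(\sV_n)])\le x)-\gF(x)\bigr|.
\]
First one checks that $[\bar\vA(\sV_n)]$ satisfies \eqref{QFproperty}: it is symmetric because $\lrs$ is a symmetric relation, it has zero diagonal because $\vx\lrs\vx$ never holds, and $\textrm{Trace}([\bar\vA(\sV_n)]^2)=\norm{[\bar\vA(\sV_n)]}_F^2=\tfrac12$ by the normalization in \eqref{A:def}. (Throughout, Theorems \ref{IND Quad Bd} and \ref{IID Quad Bd} are invoked with their ``$n$'' equal to $|\sV_n|\asymp n^2$.) The second term on the right is then $O((\log n)^{-1/2})$: apply Theorem \ref{IID Quad Bd} and use $\norm{[\bar\vA(\sV_n)]}=O((\log n)^{-1/2})$ from part (2) of Lemma \ref{A:properties}.

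For the first term I would apply Theorem \ref{IND Quad Bd} with $\vA=[\bar\vA(\sV_n)]$ and $(X_j)=(U_\vv,\vv\in\sV_n)$; this needs estimates of $\gD=\textrm{Trace}([\bar\vA(\sV_n)]^4)$ and of $L$ from \eqref{LDelta def}. Since $[\bar\vA(\sV_n)]$ is symmetric, $\gD=\norm{[\bar\vA(\sV_n)]^2}_F^2\asymp(\log n)^{-1}$ by part (6) of Lemma \ref{A:properties}, so $\gD<\tfrac12$ for $n$ large and the prefactor $(1-\log(1-2\gD))^{3/4}$ is bounded. To bound $L=\sum_\vv \nu_\vv s_\vv^3+\sum_{\vv,\vv'}\nu_\vv\nu_{\vv'}|[\bar\vA(\sV_n)]_{\vv,\vv'}|^3$, I would use three elementary facts. (i) A tail-integration estimate, using $|F_\vv(x)-\gF(x)|\le\pr(|U_\vv|\ge|x|)+\pr(|\zeta|\ge|x|)$ for $\zeta\sim N(0,1)$ together with $\int_0^\infty u^2\pr(|Y|\ge u)\,du=\E|Y|^3/3$, gives $\nu_\vv\le 2\E|U_\vv|^3+C_0\le C\gb_3$, where we use $\gb_3\ge(\E U_\vv^2)^{3/2}=1$. (ii) Each row of $[\vA(\sV_n)]$ has squared $\ell^2$-norm at most $C\log n$ (at hyperplane-distance $d$ there are at most $2(d+1)$ nodes, each contributing $1/d^2$, and $\sum_{d=1}^{n}(d+1)/d^2\le C\log n$), so by the normalization and part (1) of Lemma \ref{A:properties}, $s_\vv^2=O(\log n/\norm{[\vA(\sV_n)]}_F^2)=O(1/n^2)$; together with $\sum_\vv s_\vv^2=\tfrac12$ this yields $\sum_\vv s_\vv^3\le(\max_\vv s_\vv)\sum_\vv s_\vv^2=O(1/n)$. (iii) Likewise $\sum_{\vv,\vv'}|[\vA(\sV_n)]_{\vv,\vv'}|^3\le C|\sV_n|=O(n^2)$ (now $\sum_d(d+1)/d^3$ converges), so after normalization $\sum_{\vv,\vv'}|[\bar\vA(\sV_n)]_{\vv,\vv'}|^3=O(n^2/\norm{[\vA(\sV_n)]}_F^3)=O(1/(n(\log n)^{3/2}))$. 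Combining, $L\le C(\gb_3/n+\gb_3^2/(n(\log n)^{3/2}))$, so Theorem \ref{IND Quad Bd} bounds the first term by $C(\gb_3/n+\gb_3^2/(n(\log n)^{3/2}))^{1/4}$, and adding the two contributions gives the stated inequality.

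The step I expect to be most delicate is getting the exponents in $L$ exactly right, specifically $\sum_\vv s_\vv^3=O(1/n)$. The crude bound $s_\vv^2\le\norm{[\bar\vA(\sV_n)]}^2=O(1/\log n)$ from part (2) of Lemma \ref{A:properties} only yields $\sum_\vv s_\vv^3=O((\log n)^{-1/2})$, which would give an error of order $(\log n)^{-1/8}$ already when $\gb_3$ is bounded and is therefore too weak; one needs instead the sharper $s_\vv^2=O(1/n^2)$, i.e.\ the near-uniformity of the normalized squared row norms (each $\asymp 1/|\sV_n|$), which the operator-norm bound does not see. The row estimates in (ii) and (iii) are of the same flavour as---but not literally among---the six items of Lemma \ref{A:properties}, so in a complete write-up I would record them as a short addendum there. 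Finally, for the finitely many small $n$ the asserted bound is read with the convention that a Kolmogorov distance never exceeds $1$, which the constant $C=C(a)$ absorbs.
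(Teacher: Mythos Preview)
Your proof is correct and follows essentially the same route as the paper: split via the Gaussian quadratic form $G_n([\bar\vA(\sV_n)])$, handle the $G_n$-to-$\gF$ distance by Theorem \ref{IID Quad Bd} together with Lemma \ref{A:properties}(2), and handle the $Q_n$-to-$G_n$ distance by Theorem \ref{IND Quad Bd} after estimating $\gD$ via Lemma \ref{A:properties}(6) and $L$ via the row-sum computations $\sum_\vu [\vA(\sV_n)]_{\vv,\vu}^2\asymp\log n$, $\sum_\vu [\vA(\sV_n)]_{\vv,\vu}^3\asymp 1$ (which the paper also derives ad hoc, referring back to the argument for \eqref{var0}). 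Your tail-integration bound $\nu_\vv\le 2\E|U_\vv|^3+C_0$ loses a harmless factor of $2$ relative to the paper's $\nu_\vv\le\E|U_\vv|^3+\E|\zeta|^3$, and your value $\gD\asymp(\log n)^{-1}$ is in fact the correct reading of Lemma \ref{A:properties}(6) (the paper writes $(\log n)^{-1/2}$, apparently quoting $\|[\bar\vA]^2\|_F$ rather than its square).
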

The proof of Proposition \ref{Quad Kolmogorov Bd} uses properties of
the matrix $\vA(\sV_n)$ contained in Lemma \ref{A:properties}. The proof is postponed to \textsection \ref{Quad K Bd proof}.

\subsubsection{Proof of  the upper bound  for the detectability threshild} \label{full proof}
Proposition \ref{Quad} will \corO{play a crucial role in the
  proof  of Theorem \ref{UnknownInitial}. It will be used in conjunction with
a renormalization argument.}

\begin{proof}[Proof of Theorem \ref{UnknownInitial}, upper bound.]
  \corO{Throughout the proof, we fix $a\geq 0$ and write  $\sG_n=\sG_n^{(a)}$.}
Let $n_0:=n, n_k:=n_{k-1}^{\eps_k}$ for $k\in[K+1]$, where $K \in\dN$ and $\eps_1, \ldots, \eps_K \in(0,1)$ will be specified later \corO{(see
\eqref{eq-021617a} and \eqref{eq-021617b})}, and set $\eps_{K+1}:=0$. 

We construct a sequence of hierarchical partition $\{\cB_k\}_{0\le k\le K}$ of $\sG_n$.
$\cB_k$ will consist of vertex-disjoint subgraphs of $\sG_n$, where each subgraph is induced by certain vertices of $\sG_n$ which reside within either a square or a right angled isosceles triangle having side length $n_k$.  The index sets for these partitions are $\{\vV_k\}_{k=0}^K$, where $\vV_0=\emptyset$ and $\vV_k\subset\sZ_{n_0/n_1}^{(a)}\times \cdots \times \sZ_{n_{k-1}/n_k}^{(a)}$  for $k\in [K]$.  For $k=0$, $\cB_0$ is the singleton trivial partition $\{B^{(0)}_\emptyset:=\sV_n^{(a)}\}$. For $k=1$ let $\vV_1:=\{\vu\in\sZ_{n_0/n_1}^{(a)}: V^{(1)}_\vu \ne\emptyset\}$, where 
\[ V^{(1)}_\vu:=
\sV_{n_0}^{(a)} \cap  \corO{
[u_1n_1, u_1n_1+n_1) \times  [u_2n_1, u_2n_1+n_1).}
\]
For $\vu\in\vV_1$, define $B^{(1)}_\vu$ to be the subgraph of $\sG_n$ induced by $V^{(1)}_\vu$
and define $\cB_1:=\{B^{(1)}_\vu: \vu\in\vV_1\}$. It is clear that for each $\vu\in\vV_1$, the vertices belonging to $V^{(1)}_\vu$ reside within either a complete square  or a diagonally halved square (right angled isosceles triangle) having side length $n_1$. Also, the vertex sets $\{V^{(1)}_\vu: \vu\in\vV_1\}$ are disjoint. 
\begin{figure}
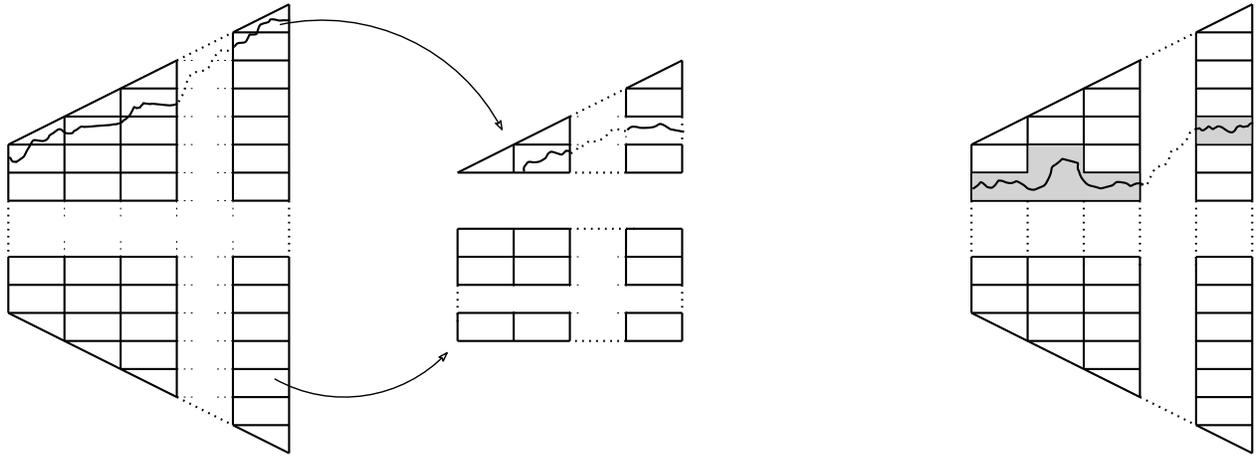

    \centering
    \includegraphics[height=6cm,page=11]{DetectionDiagram.pdf}
    \hfill
     \includegraphics[height=6cm,page=12]{DetectionDiagram.pdf}
    \caption{The left figure gives a sketch of the partitions $\cB_k, k=0, 1, 2$.  The whole graph is $B^{(0)}_\emptyset$. The larger squares correspond to subgraphs in $\cB_1$ and the small squares correspond to subgraphs corresponding to $\cB_2$. The right figure shows the corresponding generalized path on $G^{(0)}_\emptyset$.}
    \label{fig:GraphPartition}
\end{figure}
\begin{figure}
    \centering
    \includegraphics[height=7cm,page=13]{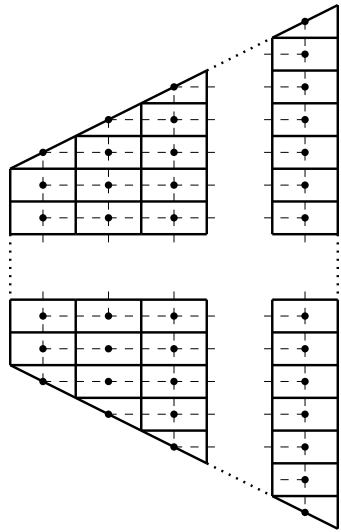}
    \caption{This is a sketch of the coarse grained graph. Here the vertex set consists of the squares and the edges are represented by the links.}
    \label{fig:coarse grained graph}
\end{figure}
Having defined $\cB_k$ for \corO{some}
$k\in [K]_0$, we obtain $\cB_{k+1}$ as follows. Note that  $B^{(k)}_{\vv} \in \cB_k$ is the subgraph of $\sG_n$ induced by the vertex set $V^{(k)}_\vv$, and the vertices belonging to $V^{(k)}_\vv$ reside within either a complete square or an isosceles triangle having side length $n_k$. So, $V^{(k)}_\vv$ can be divided into disjoint subsets $V^{(k+1)}_{\vv,\vu}, \vu\in\sZ_{n_k/n_{k+1}}^{(a)}$ as follows.  $V^{(k)}_\vv$ is a spatial translate (say $\tau$) of either $\sV_n\cap \{(x,y): 0\le x,y<n_k\}$ or $\sV_n\cap \{(x,y): 0\le y\le x<n_k\}$. 
We take $V^{(k+1)}_{\vv,\vu}$ to be the intersection of $V^{(k)}_\vv$ 
and the image under $\tau$  of 
\corO{$[u_1\frac{n_k}{n_{k+1}}, u_1\frac{n_k}{n_{k+1}}+n_{k+1}) \times
  [u_2\frac{n_k}{n_{k+1}}, u_2\frac{n_k}{n_{k+1}}+n_{k+1})$.} 
Having defined $V^{(k+1)}_{\vv,\vu}$, we define 
$\vV_{k+1}:=\{(\vv,\vu): \vv\in\vV_k,  V^{(k+1)}_{\vv,\vu}\ne\emptyset\}$. 
For  $\vv\in\vV_{k+1}$, $B^{(k+1)}_\vv$ denotes the subgraph of $\sG_n$  
induced by vertex subset $V^{(k+1)}_\vv$ and define 
$\cB_{k+1}:=\{B^{(k+1)}_\vw: \vw\in\vV_{k+1}\}$.   
See Figure \ref{fig:GraphPartition} for a sketch of these partitions. 
 
 After defining the sequence of partitions $\{\cB_k\}_{k=0}^K$ as above, we will assign a random variable $Q^{(k)}_\vv$ to the subgraph $B^{(k)}_\vv$ for all $\vv\in\vV_k$ and $0\le k\le K$.
 These random variables will be defined using backward induction in $k$. For $\vv\in\vV_K$,  we order the random variables $\{\vX_\vu: \vu \in V^{(K)}_\vv\}$ using the partial order $\preccurlyeq$ to obtain the column vector $\vX^{(K)}_\vv$ consisting of $O(n_K^2)$ many entries. Then $Q^{(K)}_\vv$ is taken to be the quadratic form
 \[  Q^{(K)}_\vv :=  (\vX^{(K)}_\vv)^T [\bar\vA(V^{(K)}_\vv)] \vX^{(K)}_\vv, \]
where $[\bar\vA(\cdot)]$ is as defined in \eqref{A:def}. Having defined the random variables $\{Q^{(l)}_\vv:  \vv \in\vV_l\}$ for all $l\in [K] \setminus [k]_0$ we obtain the random variables $\{Q^{(k-1)}_\vu: \vu\in\vV_{k-1}\}$ as follows. If $\vu\in\vV_{k-1}$, then $B^{(k-1)}_\vu$ can be thought of as a (coarse-grained) graph $\bar B^{(k-1)}_\vu$ (see figure \ref{fig:coarse grained graph}) having vertex set 
$\bar V^{(k-1)}_\vu  := \{ B^{(k)}_{\vu,\vw}: (\vu,\vw) \in \vV_k\}$ and  edge set corresponding to the neighboring relation:  
$B^{(k)}_{\vu,\vw} \ws B^{(k)}_{\vu,\vw'}$ if \corO{$\|\vw -\vw'\|_\infty=1$.}
For completeness, we define $\bar B^{(K)}$ and  $\bar V^{(K)}$ to be same as $B^{(K)}$ and $V^{(K)}$ respectively.
We order the random variables $\{Q^{(k)}_{\vu, \vw}: (\vu,\vw) \in \vV_{k}\}$ using the partial order $\preccurlyeq$ and obtain the vector $\vX^{(k-1)}_\vu$ consisting of $O((n_{k-1}/n_k)^2)$ entries. Then we define
\[  Q^{(k-1)}_\vu :=  (\vX^{(k-1)}_\vu)^T [\bar\vA(\bar V^{(k-1)}_\vu)]\vX^{(k-1)}_\vu, \]
where $[\bar\vA(\cdot)]$ is as defined in \eqref{A:def}.  
We proceed in this way until $Q^{(0)}_\emptyset$ is defined.
Thus, $Q^{(0)}_\emptyset$ is a quadratic form in terms of the random variables   $\{Q^{(1)}_\vv: \vv \in \vV_1\}$, where each $Q^{(1)}_\vv$ is again a quadratic form in terms of the random variables $\{Q^{(2)}_{\vv,\vu}: (\vv,\vu) \in \vV_2\}$, and so on.  

Our next goal is to study the distribution of $Q^{(0)}_\emptyset$ under $\pr_0$ and $\pr_{1,n}$. Choose and fix any path $\vpi \in \sP(\sG_n)$.  
Note that for each $0\le k\le K$ and $\vv\in\vV_k$,  $\vpi$ induces  generalized paths $\vPi^{(k)}_\vv\in\wt\sP(\bar B^{(k)}_\vv)$  \corO{for}
the (possibly coarse-grained) graph $\bar B^{(k)}_\vv$. See figure \ref{fig:GraphPartition} for a sketch of an anomalous path $\vpi$ and corresponding (coarse-grained) generalized paths $\vPi^{(0)}_\emptyset$ and $\vPi^{(1)}_\vv$ for some $\vv\in\vV_1$.
Using this fact and Proposition \ref{Quad} we \corO{next construct}
the basic noise variables $(W^{(k)}_\vv, \vv\in \vV_k)$, 
additional noise variables $(U^{(k)}_{\corO{\vv}}, \vv\in \vV_k)$ 
and signals $(\nu^{(k)}_{\corO{\vv}}, \vv\in \vV_k)$ for $0\le k\le K$ inductively as follows. 
\corO{(Since $\vpi$ is fixed, we eliminate it from the notation, writing
  e.g. $U^{(K)}_{\vv}$ for
  $U^{(K)}_{\vv,\vpi}$.)}

First we define the attributes at level $K$. For $\vv\in\vV_K$,  
we apply Proposition \ref{Quad}, \corO{with $n$ replaced by $n_K$},
basic noise variables $(X_\vu, \vu\in V^{(K)}_\vv)$, 
which are the basic noise variables for the vertices of $\sG_n$, 
additional noise variables given by zeros and  
signals $(\mu\mathbf 1_{\{\vu\in\vpi\}}, \vu\in V^{(K)}_\vv)$, 
and obtain the basic noise variable,  additional noise variable and signal, 
which we will denote by 
$W^{(K)}_\vv, U^{(K)}_{\corO{\vv}}$ and $\nu^{(K)}_{\corO{\vv}}$ respectively.

  Having obtained $((W^{(k)}_\vv, U^{(k)}_{\corO{\vv}}, \nu^{(k)}_{\corO{\vv}}), \vv\in\vV_k)$ for $k\in[K]$, we obtain the attributes  
  $(W^{(k-1)}_\vv, U^{(k-1)}_{\corO{\vv}}, \nu^{(k-1)}_{\corO{\vv}})$ 
  by applying
 Proposition \ref{Quad} with basic noise variables 
 $(W^{(k)}_{(\vv,\vu)}, (\vv,\vu)\in \bar V^{(k-1)}_\vv)$, additional 
 noise variables  $(U^{(k)}_{\corO{(\vv,\vu)}}, 
 (\vv,\vu)\in \bar V^{(k-1)}_\vv)$ and  signals 
 $(\nu^{(k)}_{\corO{(\vv,\vu)}}, (\vv,\vu)\in \bar V^{(k-1)}_\vv)$. 
 We proceed in this way until $W^{(0)}_\emptyset, U^{(0)}_{\corO{\emptyset}}$ 
 and $\nu^{(0)}_{\corO{\emptyset}}$ are defined. 

 \corO{We next}
 estimate the signals $(\nu^{(k)}_{\corO{\vv}}, k\in[K],\vv\in\vV_k)$. 
 \corO{To do so,} we first 
 define certain vertical segments $(S^{(k)}_\vi, 
 \vi\in\vI_k:=\otimes_1^k [n_{i-1}/n_i]_0,  k\in[K+1])$ of $\sG_n$, which 
 we  call {\it slabs}. Define  
\[ S^{(0)}_\emptyset := \sV_n, \text{ and } 
S^{(k)}_\vi  := \left\{\vv\in\sV_n: \sum_{j=1}^k i_j n_j \le v_1<\sum_{j=1}^k i_j n_j+n_k\right\}, \text{ for }  \vi \in \vI_k \text{ and } k\in[K+1].
 \]
 \corO{Note that the slabs are line segments if $k=K+1$.}
 We also define the projection map 
 \[\fp:\cup_{k\in[K]} \vV_k \mapsto \corO{\cup}_{k\in[K]}\vI_k \text{ by assigning } \fp(\vv) := \vi, \text{ if } \vv_j \text{ has first component } i_j \text{ for all }  j\in[k].\]  
We call a subgraph $B^{(k)}_\vv$ {\it touched} if $\vpi$ intersects it.  We call a subgraph $B^{(K)}_\vv$ {\it good} if $\vpi$ intersects 
it in at least $\frac 12 n_{K}$ slabs \corO{(=line segments)}, \ie
among $S^{(K+1)}_{\corO{(}\fp(\vv),i_{K+1}i\corO{)}}, i_{K+1}\in[n_{K}]_0$.
\[ B^{(K)}_\vv \text{ is  `good' if } \left|\left\{i_{K+1}\in[n_{K}]_0: \pi_i\in B^{(K)}_\vv \text{ for } i=\sum_{j=1}^K v_{j,1}n_j+i_{K+1}\right\}\right|\ge \frac 12 n_{K}. \] 
We extend the definition of good subgraphs to other 
levels inductively as follows. For $k\in[K-1]$, 
we call a subgraph $B^{(k)}_\vv$ {\it good} if there are at least 
$\frac 12 (n_k/n_{k+1})$ many slabs among $S^{(k+1)}_{\corO{(}
\fp(\vv),i_{k+1}\corO{)}}, 
i_{k+1}\in[n_k/n_{k+1}]_1$ where $B^{(k)}_\vv$ contains at least one good subgraph $B^{(k+1)}_{\corO{(}\vv,\vu\corO{)}}$,  \ie
\[ B^{(k)}_\vv \text{ is `good' if  } \left|\left\{i_{k+1} \in[n_k/n_{k+1}]_0:  \exists\text{  good  $B^{(k+1)}_{\corO{(}\vv,\vu\corO{)}}$
satisfying $\fp(\vu)=i_{k+1}$}\right\}\right| \ge \frac{n_k}{2n_{k+1}}.\]

\corO{The following lemma contains the required control
  on the signal variables $\nu^{(k)}_{\vv}$.}
\corO{
  \begin{lem}
    \label{lem-CD}
There is a constant 
$c\in(0,1)$ such that the following holds  for all $k\in[K+1]_0$.
\begin{align}
 (A) \;&\; \nu^{(k)}_{\vv} \ge  \underline \nu^{(k)} := \frac 1c 
 \exp\left[2^{K-k+1}\log(c\mu)\right] \prod_{l=k}^{K} 
 [\log(n_l/n_{l+1})]^{2^{l-k-1}}  \text{ if $B^{(k)}_\vv$ is good}. 
 \notag\\
 \;&\;\label{ind hyp} \\
 (B) \;&\; \nu^{(k)}_{\vv} \le  \bar \nu^{(k)} := c \exp\left[2^{K-k+1}\log(\mu/c)\right] \prod_{l=k}^{K} [\log(n_l/n_{l+1})]^{2^{l-k-1}}  \text{ if $\vpi$ intersects $B^{(k)}_\vv$}.  \notag
 \end{align}
 \end{lem}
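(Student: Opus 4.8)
The plan is to prove (A) and (B) together by \emph{downward} induction on $k$, from $k=K+1$ down to $k=0$, with $c\in(0,1)$ taken to be (at most) the constant $c=c(a)$ furnished by part (3) of Proposition \ref{Quad}. The base case $k=K+1$ is the node scale: there $B^{(K+1)}_\vv$ is the singleton $\{\vv\}$, $\nu^{(K+1)}_\vv=\mu\mathbf 1_{\{\vv\in\vpi\}}$ is the signal fed into Proposition \ref{Quad} at level $K$, and a node is ``good'' (and ``intersected by $\vpi$'') exactly when it lies on $\vpi$; since $\underline\nu^{(K+1)}=\frac1c\exp[\log(c\mu)]=\mu=c\exp[\log(\mu/c)]=\bar\nu^{(K+1)}$, (A) and (B) hold with equality. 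I would also record, by the same downward induction, the auxiliary fact that a good block $B^{(k)}_\vv$ is always intersected by $\vpi$ (immediate at level $K$ from the definition of ``good''; at level $k<K$ a good block contains a good sub-block, which by induction meets $\vpi$), and hence $B^{(k)}_\vv$ is a component of every coarse generalized path induced by $\vpi$ that refines to scale $k$.

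For the inductive step $k\mapsto k-1$ ($1\le k\le K+1$), fix $\vu\in\vV_{k-1}$; if $\vpi$ misses $B^{(k-1)}_\vu$ then $\nu^{(k-1)}_\vu=0$ and there is nothing to prove, so assume $\vpi$ meets it. By construction $Q^{(k-1)}_\vu$ arises from Proposition \ref{Quad} applied to the coarse-grained graph $\bar B^{(k-1)}_\vu$ (of side $n_{k-1}/n_k$, so ``$n$'' is $n_{k-1}/n_k$) with basic noise $(W^{(k)}_{(\vu,\vw)})$, additional noise $(U^{(k)}_{(\vu,\vw)})$, signals $(\nu^{(k)}_{(\vu,\vw)})$ and induced path $\vPi^{(k-1)}_\vu$, and part (2) of that proposition identifies $\nu^{(k-1)}_\vu$ with the quantity $\nu(\bar B^{(k-1)}_\vu)$ controlled in part (3). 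For (B): the maximal level-$k$ signal $\bar\nu:=\max_\vw\nu^{(k)}_{(\vu,\vw)}$ is attained on a sub-block lying on $\vPi^{(k-1)}_\vu$ (the rest carry signal $0$), so $\bar\nu\le\bar\nu^{(k)}$ by hypothesis (B) at level $k$, and part (3) gives $\nu^{(k-1)}_\vu\le c^{-1}\bar\nu^2\sqrt{\log(n_{k-1}/n_k)}\le c^{-1}(\bar\nu^{(k)})^2\sqrt{\log(n_{k-1}/n_k)}$. For (A): if $B^{(k-1)}_\vu$ is good, its definition yields an index set $I$ of columns of $\bar B^{(k-1)}_\vu$ with $|I|\ge\frac12(n_{k-1}/n_k)$, each column containing a good level-$k$ sub-block; by the auxiliary fact these sub-blocks lie on $\vPi^{(k-1)}_\vu$, so $\vPi^{(k-1)}_\vu$ is nonempty on $I$ and, by hypothesis (A) at level $k$, has signal $\ge\underline\nu^{(k)}$ there, i.e. $\underline\nu_I\ge\underline\nu^{(k)}$ in the notation of part (3); since $|I|\ge(n_{k-1}/n_k)/2$, part (3) gives $\nu^{(k-1)}_\vu\ge c\,\underline\nu_I^2\sqrt{\log(n_{k-1}/n_k)}\ge c(\underline\nu^{(k)})^2\sqrt{\log(n_{k-1}/n_k)}$.

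The induction closes once one checks the two elementary identities $c^{-1}(\bar\nu^{(k)})^2\sqrt{\log(n_{k-1}/n_k)}=\bar\nu^{(k-1)}$ and $c(\underline\nu^{(k)})^2\sqrt{\log(n_{k-1}/n_k)}=\underline\nu^{(k-1)}$: squaring doubles every exponent $2^{l-k-1}$ in the scale product to $2^{l-k}$ and turns $2^{K-k+1}$ into $2^{K-k+2}$ in the exponential, the extra factor $\sqrt{\log(n_{k-1}/n_k)}$ supplies precisely the missing $l=k-1$ term (exponent $2^{(k-1)-k}=\frac12$), and the prefactors match since $c^{-1}c^2=c$ and $c\,c^{-2}=c^{-1}$. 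I expect the only real work to be this book-keeping: confirming that the ``number of good columns'' in the level-$(k-1)$ goodness criterion is exactly the set $I$ to which part (3) of Proposition \ref{Quad} applies, that good sub-blocks are automatically carried by the induced generalized path, and that no loss beyond the single constant $c$ accumulates over the $K+1$ scales; all analytic content sits inside Proposition \ref{Quad}. (Legitimacy of the recursive application of Proposition \ref{Quad}, i.e. preservation of Property \ref{NoiseAssump} along the recursion, belongs to the construction and does not affect these signal bounds.)
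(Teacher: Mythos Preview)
Your proof is correct and follows essentially the same downward induction as the paper, driving the recursion via the two-sided bound in part (3) of Proposition~\ref{Quad} and closing it with exactly the squaring identities you wrote out. The only cosmetic differences are: you anchor the induction at a fictitious node level $k=K+1$ (where $\underline\nu^{(K+1)}=\bar\nu^{(K+1)}=\mu$) and then run one step to reach $k=K$, whereas the paper starts directly at $k=K$; and you isolate the auxiliary fact ``good $\Rightarrow$ touched $\Rightarrow$ on the induced generalized path'' explicitly, which the paper uses without comment. The paper also bundles into this lemma's proof two additional combinatorial facts --- every slab contains at least one good block and at most two touched blocks --- but these are not needed for (A),(B) themselves; they are used afterwards to guarantee that $B^{(0)}_\emptyset$ is good so that (A) can be invoked at $k=0$.
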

 }
 \corO{
   \begin{rem}
     Note that the condition that $\vpi$ intersects $B^{(k)}_\vv$
 in (B) is made only for aestetic reasons: the claim is obvious otherwise,
 for then $\nu^{(k)}_\vv=0$.
 \end{rem}}
 \begin{proof}[Proof of Lemma \ref{lem-CD}]
   \corO{We begin with the following facts.}
\corO{\begin{align*}
(I) \;&\; \text{ Each of the slabs } S^{(k)}_\vi, \vi\in \vI_k, \text{ has at least one good $B^{(k)}_\vv$ satisfying $\fp(\vv)=\vi$}. \notag \\
(II) \;&\; \text{ Each of the slabs } S^{(k)}_\vi, \vi\in \vI_k, \text{ has at most two  touched $B^{(k)}_\vv$ satisfying $\fp(\vv)=\vi$}. \notag 
 \end{align*}
 }

 \corO{To see (II), note that}
  since the slabs $S^{(k)}_\vi, \vi\in\vI_k,$ have width $n_k$,  
the subgraphs $\{B^{(k)}_\vv: \fp(\vv)=\vi\}$ constitute 
a partition of $S^{(k)}_\vi$, and each $B^{(k)}_\vv$ \corO{resides} 
within a square (or isosceles triangle at the boundary of $S^{(k)}_\vi$) 
having side length $n_k$, 
it \corO{follows}
that for each $k\in[K]$, the 
path $\vpi$ intersects each $S^{(k)}_\vi$ in either 
one subgraph $B^{(k)}_\vv\in\cB_k$ satisfying $\fp(\vv)=\vi$ 
or in two (consecutive   and disjoint) subgraphs 
$B^{(k)}_\vv, B^{(k)}_\vu\in\cB_k$ satisfying 
$\fp(\vv)=\fp(\vu)=\vi$. This proves (II).  

 \corO{To see (I), we use  induction on $k$. We first
 show the induction basis.}
Since each slab $S^{(K)}_\vi$ has $n_K$ 
hyperplanes and \corO{$\vp$ crosses each hyperplane}, 
for each slab $S^{(K)}_\vi$, 
there is at least one good subgraph  
$B^{(K)}_\vv$ satisfying $\fp(\vv)=\vi$, \corO{showing (I) for $k=K$}.


Now suppose \corO{(I)}
hold for  $k=l+1$.  
So each slab $S^{(l+1)}_\vi$ has at least one 
good subgraph $B^{(l+1)}_\vv$ satisfying $\fp(\vv)=\vi$, 
at most two touched subgraphs  
$B^{(l+1)}_\vv, B^{(l+1)}_\vu$ satisfying $\fp(\vv)=\fp(\vu)=\vi$, 
$\nu^{(l+1)}_\vv  \ge \underline \nu^{(l+1)}$  if  
$B^{(l+1)}_\vv$ is good, and
$\nu^{(l+1)}_\vv  \le \bar \nu^{(l+1)}$ if  $B^{(l+1)}_\vv$ is touched.
 
Now fix $\vi\in\vI_l$.  $S^{(l)}_\vi$ consists of 
$n_l/n_{l+1}$ many slabs of level $l+1$, namely 
$S^{(l+1)}_{\vi,i_{l+1}}, i_{l+1} \in [n_l/n_{l+1}]_0$. 
Each such slab has at least one good subgraph by 
\corO{Assumption (I) for $k=l+1$}.
Also, as mentioned in the \corO{beginning of the} proof, the portion of 
$\vpi$ within slab $S^{(l)}_\vi$ resides in either 
one subgraph $B^{(l)}_\vv\in\cB_l$ 
satisfying $\fp(\vv)=\vi$ or in two (consecutive   and disjoint) subgraphs $B^{(l)}_\vv, B^{(l)}_\vu\in\cB_l$ satisfying $\fp(\vv)=\fp(\vu)=\vi$.    
In the first case,  it is obvious that $B^{(l)}_\vv$  is good.  
In the second case, if both $B^{(l)}_\vv$ and $B^{(l)}_\vu$ are not good, 
then there will be at least one slab among 
$S^{(l+1)}_{\vi, i_{l+1}}, i_{l+1} \in[n_l/n_{l+1}]_0,$   
having no good subgraph of level $l+1$ within it. 
This leads to a contradiction to the \corO{induction hypothesis concerning
(I). We conclude that (I) holds for all $k\in [K+1]_0$.}

\corO{We now turn to the proof of (A),(B). Again, the proof is by backward
induction on $k$. We begin with proving the basis of the induction.}
Suppose \corO{that}
$B^{(K)}_\vv$ is good. Then, using Proposition \ref{Quad}, 
particularly the lower bound in \eqref{nuGm def},   we get that
$\nu^{(K)}_{\corO{\vv}} \ge c\mu^2 \sqrt{\log(n_K)}$, 
\corO{thus showing (A) in case $k=K$}. 
On the other hand, the upper bound in  
\eqref{nuGm def} implies $\nu^{(K)}_{\vv,\vpi} \le c^{-1} \mu^2 
\sqrt{\log(n_K)}$ whenever $\vpi$ intersects $B^{(K)}_\vv$, 
\corO{showing (B) in case $k=K$.}
\corO{This completes the proof of the base of the induction.}

Now suppose \corO{(A),(B)}
hold for  $k=l+1$.  
\corO{Suppose $B^{(l)}_\vv$ is good.}  
Then, using Proposition \ref{Quad}, particularly the 
lower bound in \eqref{nuGm def}, and noting that 
$\bar B^{(l)}_\vv$  resides within either a square 
or an isosceles triangle having side length $n_l/n_{l+1}$,
$\nu^{(l)}_{\corO{\vv}} \ge c(\underline \nu^{(l+1)})^2 
\sqrt{\log(n_l/n_{l+1})}$.  On the other hand, 
the upper bound in \eqref{nuGm def} implies 
$\nu^{(l)}_{\corO{\vv}} \le \frac 1c(\bar \nu^{(l+1)})^2 \sqrt{\log(n_l/n_{l+1})}$ whenever $\vpi$ intersects $B^{(l)}_\vv$.
Combining these with the expressions of 
$\underline \nu^{(l+1)}$ and $\bar \nu^{(l+1)}$  
(obtained from \eqref{ind hyp}) we see that (A) and (B) of \eqref{ind hyp} hold for $k=l$. Thus, all assertions of \eqref{ind hyp} are true for $k=l$, and the induction argument is complete. 
\end{proof}

\corO{We return to the proof of \corOO{the upper bound of} 
   Theorem \ref{UnknownInitial}. Note first that the choice of
   constants $K$ and $\epsilon_k$ made in \eqref{eq-021617a} and
   \eqref{eq-021617b} below, together with Lemma \ref{lem-CD}(B), 
 ensure that 
$\max_{\vu\in\bar V^{(k)}_\vv} (\nu^{(k)}_{\corO{(\vv,\vu)}})^2\leq 1$.
 }
 By construction it \corO{follows, using Proposition \ref{Quad},}
 that for all $0\le k\le K$,
\[ Q^{(k)}_\vv \eqd
\begin{cases}
W^{(k)}_\vv  & \text{ under } H_0 \\
W^{(k)}_\vv  + U^{(k)}_{\corO{\vv}}  + \nu^{(k)}_{\corO{\vv}} & 
\text{ under } H_{1,\vpi} 
\end{cases},  \E  U^{(k)}_{\corO{\vv}} =0, 
\E[ U^{(k)}_{\corO{\vv}} ]^2 \le C\frac{1}{\log(n_k/n_{k+1})} 
\max_{\vu\in\bar V^{(k)}_\vv} (\nu^{(k+1)}_{\corO{(\vv,\vu)}})^2.
\]
In particular, $\pr_0(Q^{(0)}_\emptyset \le \cdot) = \pr(W^{(0)}_\emptyset \le \cdot)$, so using  \eqref{Wmbd} with \corO{$n$}
replaced by $|\vV_1|\asymp (n_0/n_1)^2$,
\[
\sup_{x\in\dR} |\pr_0(Q^{(0)}_\emptyset \le  x) - \gF(x)| \le C \left[ \left(\log n^{2-2\eps_1}\right)^{-1/2}+\left(\frac{\gb_3^{(1)}}{n^{2-2\eps_1}}+\frac{(\gb_3^{(1)})^2}{n^{2-2\eps_1}\log^{3/2}n^{2-2\eps_1}}\right)^{1/4}\right],\]
where $\gb_3^{(1)} := \max_{\vv\in\vV_1} \E(|W^{(1)}_\vv|^3)$. In order to bound $\gb_3^{(1)}$, we will use Theorem \ref{Quad Moment Bd}. If we define $\gb_s^{(k)} := \max_{\vv\in\vV_k}\E(|W^{(k)}_\vv|^s)$,  
then Theorem \ref{Quad Moment Bd} \corO{gives} 
$\gb_s^{(k)} \le \corO{C_s}
\gb_{2s}^{(k+1)}$ for all $k\in[K-1]$, 
\corO{where $C_s=2^{5s/2}\Gamma((s+1)/2)\Gamma(s+1/2)^{1/2}$}. 
Also $\gb_s^{(K)}\asymp \int_\dR |x|^s\; d\gF(x) \asymp \gC((s+1)/2)$. Combining \corO{the}
last two observations,
\[ \gb_3^{(1)} \le \exp\left[\frac 52\log 2 \sum_{l=0}^{K-1} 3 \cdot 2^l +\sum_{l=0}^{K} \log\gC\left(\frac 12+\frac 32 2^l\right) + \frac 12\sum_{l=0}^{
\corO{K-1}} 
\log\gC\left(\frac 12+ 3\cdot 2^l\right)\right] 
\le \exp\left(CK^2 2^K\right) \]
for some $C>0$. The last inequality holds because $\gC(k) \asymp k^k$.  
Combining the last two displays \corO{we get}
\beq\label{W0bd}
\sup_{x\in\dR} |\pr_0(Q^{(0)}_\emptyset \le  x) - \gF(x)| \le C \left[ \left(\log n^{2-2\eps_1}\right)^{-1/2}+ \exp(CK^22^K-\frac 12(1-\eps_1) \log n)\right].
\eeq
for some $C>0$.

\corO{We now specify the constants $K, \epsilon_k$ and
  simplify the formula for
  $\underline \nu^{(0)}_\emptyset, \bar\nu^{(0)}_\emptyset$:}
\beqax 
\underline\nu^{(0)}_\emptyset &=& 
 \frac 1c \exp\left[2^{K+1}\log(c\mu)\right] \prod_{l=0}^{K} [\log(n_l/n_{l+1})]^{2^{l-1}} \\
& = & \frac {1}{c\sqrt{\log n}} \exp\left[2^{K+1}\log(c\mu\sqrt{\log n})\right] \prod_{l=0}^{K} [\eps_1\eps_2\cdots\eps_l(1-\eps_{l+1})]^{2^{l-1}}.
\eeqax
Rearranging the terms, the last product equals 
\[ \prod_{s=1}^K  (1-\eps_s)^{2^{s-2}} \eps_s^{2^{K}-2^{s-1}} = \prod_{s=1}^K \left[(1-\eps_s)\eps^{2^{K-s+2}-2}\right]^{2^{s-2}}. \] 
It is not difficult to see that the above product will be maximized if we take 
\begin{equation}
  \label{eq-021617a}
  1-\eps_s=(2^{K-s+2}-1)^{-1}, \quad  s\in[K]. 
\end{equation}
\corO{With this choice,}
  and using the fact that $e^{-k\eps} \ge (1-\eps)^k \ge 1-k\eps$ 
  for any $k>0$, we see that the product is 
\[ \asymp \exp\left(  -\log 2 \sum_{s=1}^K (K-s+2)2^{s-1} \right)
= \exp\left(  -\log 2 \sum_{s=2}^{K+1} s2^{K-s+1} \right) = \exp(-C_12^K) \]
for some constant $C_1>0$. Therefore, there are constants $c, C_1>0$ such that
\[ \nu^{(0)}_\emptyset \ge  \frac {1}{c\sqrt{\log n}} \exp\left[2^{K+1}\left(\log(c\mu\sqrt{\log n}) - C_1\right)\right] \]
Now applying (2) of Proposition \ref{Quad},
\beqax 
\E[(U^{(0)}_\emptyset)^2] &\le & C_2\frac{(\bar\nu^{(1)})^2}{\log(n/n_1)} = C_2c\frac{\bar\nu^{(0)}}{\log^{3/2}(n/n_1)} \\
&\le & C_2c \frac{2^{\frac 32(K+1)} }{\log^2 n}\exp\left[2^{K+1}\left(\log(\frac 1c \mu\sqrt{\log n}) - C_1\right)\right] \eeqax
We can choose $\gD_0>0$ large enough so that for any $\gD\ge\gD_0$, $\log(c\gD)-C_1>0$ and $\log(\frac 1c\gD)-C_1\le \frac 54 \log(c\gD)-C_1$. If we assume $\mu\sqrt{\log n}\ge \gD_0$, and choose $K$ so that 
\begin{equation}
  \label{eq-021617b}
  2^{K+1}[\log(c\mu\sqrt{\log n})-C_1] =\log\log n,
\end{equation}
then $\nu^{(0)}_\emptyset \ge \sqrt{\log n}\corO{/c}$, $\E[(U^{(0)}_\emptyset)^2]=o(1)$ and the bound in \eqref{W0bd} is $o(1)$. So for this choice of $K$, $W^{(0)}_\emptyset \convD N(0,1), U^{(0)}_\emptyset \convP 0$ and $\nu^{(0)}_\emptyset \to \infty$. So if one rejects the null hypothesis when 
\corO{$Q^{(0)}_\emptyset$
exceeds $\underline\nu^{(0)}_\emptyset/2$,}
then its minimax risk will be $o(1)$. 
\end{proof}

\subsection{Proof of the lower bound of detection threshold}
\begin{proof}[Proof of Theorem \ref{UnknownInitial}, lower bound.]
\corO{Since the hypothesis testing problem $(\sP(\sG_n^{(a)}),\mu_n,\gF)$ has a signal
hypothesis which is strictly larger than that of 
$(\sP(\sG_n^{(0)}),\mu_n,\gF)$,
the asymptotically powerless part of Theorem \ref{UnknownInitial}
follows from \cite[Theorem 1.1]{ACHZ08}.}
\end{proof}

\section{Proof of the supporting results}
\subsection{Proof of Lemma \ref{A:properties}} \label{sec-quadlemproofs}
\begin{proof}[Proof of Lemma \ref{A:properties}]
(1).  Note that $\norm{[\vA(\sV_n)]}_F^2=2\sum_{(i,u) \in \sV_n} \;\;\sum_{(j,v): (i,u) \precsim (j,v)}    (j-i)^{-2}$. In order to estimate the above sum note that for fixed $i,u,j$ the number of choices of $v$ satisfying $(i,u)\precsim (j,v)$ is $\asymp |j-i|$. Combining this with the fact that $\sum_{1\le k\le i} k^{-1} \asymp \log i$ implies that 
\beq \label{var0}
 \norm{[\vA(\sV_n)]}_F^2 \asymp \sum_{i,u,j: (i,u)\in\sV_n \text{ and } j>i} (j-i)^{-1} \asymp \sum_{i,u: (i,u)\in\sV_n} \log(n-i) \asymp n^2\log n.\eeq

\noindent
(2).  In order to bound the spectral 
norm of $[\vA(\sV_n)]$ we will use the well known fact that 
\corO{for any matrix $\vB\in\dR^{k\times l}$,}
 \[ \max\{|\gl|: \text{ $\gl$ is an eigenvelue of $\vB$}\} \le \max_{i\in[k]}\sum_{j\in[l]} |b_{i,j}|.\]
In our case, since $[\vA(\sV_n)]$ is symmetric,  $\norm{[\vA(\sV_n)]}$ 
equals the largest absolute eigenvalue \corO{of $[\vA(\sV_n)]$}, so
$\norm{[\vA(\sV_n)]} \le \max_{(i,u)\in\sV_n} \sum_{(j,v) \in \sV_n} [A(\sV_n)]_{(i,u),(j,v)}$.   
Note that for each $(i,u)\in\sV_n$, 
\[ \sum_{(j,v)\in\sV_n} [A(\sV_n)]_{(i,u),(j,v)} = \sum_{j\in [n], j \ne i} \sum_{v: (j,v)\in \llb(i,u)\rrb} \frac{1}{|i-j|} \asymp \sum_{j\in [n], j \ne i} 1 \asymp n, \] 
which gives the result.

\noindent
(3).   For any $\pi\in\sP_n$ and $i\in[n]_0$, let $\pi_i\in\dZ$ be such that $(i,\pi_i)\in\pi$.  Then
\beqax
\mathds 1_\pi^T [\vA(\sV_n)]\mathds 1_\pi = 2\sum_{0\le i<j<n } [\vA(\sV_n)]_{(i,\pi_i),(j,\pi_j)} 
= 2\sum_{0\le i<j<n } (j-i)^{-1}
\asymp \sum_{i\in[m]_0}  \log(n-i) \asymp n\log n.
\eeqax

\noindent
(4). For $\pi\in\sP_n$, $\mathds 1_\pi^T [\vA(\sV_n)]^2\mathds 1_\pi$ equals
\beqax  \sum_{i, j\in[n]_0} \left(\left[\vA(\sV_n)\right]^2\right)_{(i,\pi_i),(j,\pi_j)} 
&=& 2\sum_{0\le i< j<n}\;\; \sum_{(k,v) \in \llb(i, \pi_i)\rrb \cap \llb(j,\pi_j)\rrb} (|i-k|\cdot|j-k|)^{-1}\\
&& +  \sum_{i\in[n]_0}\;\; \sum_{(k,v) \in \llb(i, \pi_i)\corO{\rrb}} (k-i)^{-2}  \;\; =: \;\; I_1+I_2. 
\eeqax
In order to estimate $I_2$ note that for any $i,k \in [n]_0$, $|\{v: (k,v)\lrs  (i,\pi_i)\}| \asymp |k-i|$. So
\beq \label{I2bd} 
I_2 \asymp \sum_{i\in[n]_0}\sum_{k\in[n]_0, k\ne i} |k-i|^{-1} \asymp n\log n.\eeq
 In order to estimate $I_1$ note that  the $(i,j)$-th inner sum of $I_1$ equals
\begin{align} 
 &  \left[ \sum_{k=0}^{i-1}  \;\; \sum_{v: (k,v) \lrs  (i,\pi_i), (j,\pi_j)} (i-k)^{-1}(j-k)^{-1} 
   +\sum_{k=j+1}^{n-1} \;\;
 \sum_{v: (k,v) \in (i,\pi_i), (j,\pi_j)} (k-i)^{-1}(k-j)^{-1} 
 \right] \notag \\
 &  + \left.
  \sum_{k: i< k<j} \sum_{v: (k,v) \lrs  (i,\pi_i), (j,\pi_j)} (k-i)^{-1}(j-k)^{-1} \right] \;\; =: \;\; I^{i,j}_{1,1} + I^{i,j}_{1,2} + I^{i,j}_{1,3}. \label{Iij11def}
\end{align}
 For the sum $I^{i,j}_{1,1}$, we see that  $(k,v)\lrs (i,\pi_i)$ implies $(k,v)\lrs (j,\pi_j)$. So the number of summands in the $k$-th inner sum is $\asymp (i-k)$. Hence, 
 \begin{equation}
   \label{eq-11.1}
   I^{i,j}_{1,1}\asymp \sum_{k=0}^{i-1} (j-k)^{-1} \asymp \int_{j-i}^j \frac 1x\; dx=\log\frac{j}{j-i}, 
 \end{equation}
 and so 
 \beqax
 \sum_{0\le i<j<n}  I^{i,j}_{1,1} &\asymp& \sum_{0\le i<j<n} \log j - \sum_{0\le i<j<n} \log(j-i)
    = \sum_{j=1}^{n-1} j\log j  \\
     && \hspace*{.5cm} - \sum_{i=0}^{n-2} \sum_{k=1}^{n-1-i} \log k = \sum_{j=1}^{n-1} j\log j - \sum_{k=1}^{n-1} (n-k)\log k = \sum_{j=1}^{n-1} 2j \log j - n    \sum_{j=1}^{n-1} \log j \\
     &\asymp& \int_1^n 2x\log x\; dx - n\int_1^{n} \log x\; dx = n^2\log n -(n^2-1)/2 - n^2\log  n+ n(n-1) \asymp n^2.
 \eeqax
 Using a similar argument,
 $I^{i,j}_{1,2} \asymp \sum_{k=j+1}^{n-1} (k-i)^{-1} \asymp \log\frac{n-i-1}{j_i}$, so 
 \corO{$\sum_{0\le i<j<n}  I^{i,j}_{1,2}\asymp n^2$.} 
  For the sum $I^{i,j}_{1,3}$, the number of summands in the inner sum is at most $2(k-i)$ (resp.~$2(j-k)$) when $k<(i+j)/2$ (resp.~$k\ge(i+j)/2$). 
  Thus 
  \begin{equation}
    \label{eq-11.5}
    \corO{I^{i,j}_{1,3}}\leq \sum_{k: i+1 \le k \le (i+j)/2} (j-k)^{-1}
  + \sum_{k:(i+j)/2< k < j} (k-i)^{-1}  \asymp \int_{(j-i)/2}^{j-i} \frac 1x \; dx \asymp 1,
\text{ so } \sum_{0\le i<j<n} I^{i,j}_{1,3} \le Cn^2 
\end{equation}
for some constant $C$. Combing the last three displays, $I_1 =\sum_{k=1}^3\sum_{0\le i<j<n} I^{i,j}_{1,k} \asymp n^2$. This together with \eqref{I2bd} gives the result. 

\noindent
(5). For $\pi\in\sP_n$, $\ind_{\vpi}^T [\vA(\sV_n)]\textrm{Diag}(\ind_{\vpi})[\vA(\sV_n)]\ind_{\vpi}$ equals
\beqax  \sum_{i, j\in[n]_0} \left(\left[\vA(\sV_n)\right]\textrm{Diag}
(\ind_{\vpi})[\vA(\sV_n)]\right)_{(i,\pi_i),(j,\pi_j)} 
&=& 2\sum_{0\le i< j<n}\;\; \sum_{k \in [n]_0, k\ne i, j} (|i-k|\cdot|j-k|)^{-1}\\
&& +  \sum_{i\in[n]_0}\;\; \sum_{k \in [n]_0, k\ne i} (k-i)^{-2}  \;\; =: \;\; 
\corO{J_1+J_2}. 
\eeqax
In order to estimate $J_2$ note that for any $i \in [n]_0$, $\sum_{k\in[n]_0, k\ne i} (k-i)^{-2} \asymp C$, so $J_2 \asymp n$.
 In order to estimate $J_1$ note that  the $(i,j)$-th inner sum of $J_1$ equals
\begin{align*} 
 &  \left[ \sum_{k=0}^{i-1}  \;\; (i-k)^{-1}(j-k)^{-1} 
   +\sum_{k=j+1}^{n-1} \;\;
  (k-i)^{-1}(k-j)^{-1} 
 \right]\\
 &  + \left.
  \sum_{k: i< k<j}  (k-i)^{-1}(j-k)^{-1} \right] \;\; =: \;\; J^{i,j}_{1,1} + J^{i,j}_{1,2} + J^{i,j}_{1,3}.
\end{align*}
 For the sum $J^{i,j}_{1,1}$, we see that  
 $$J^{i,j}_{1,1}= \sum_{k=0}^{i-1} \frac{1}{j-i}[(i-k)^{-1}-(j-k)^{-1}] \asymp \frac{1}{j-i}[\int_{1}^i \frac 1x\; dx-\int_{j-i}^j \frac 1x\; dx]=\frac{1}{j-i}\log\frac{i(j-i)}{j},$$ and so 
 \beqax
 \sum_{0\le i<j<n}  J^{i,j}_{1,1} &\asymp& \sum_{0\le i<n} \log i  \sum_{j:i<j<n}\frac{1}{j-i} -\sum_{0<j<n} \log j \sum_{0\le i<j}\frac{1}{j-i} + \sum_{0\le i<n} \sum_{j-i=1}^{n-i}\frac{\log(j-i)}{j-i}
    \\
    &\asymp& \sum_{0\le i<n} \log i\log(n-i) - \sum_{0<j<n} (\log j)^2 +  \sum_{0\le i<n} \frac 12 (\log(n-i))^2\\
     &\le&  \log n\int_1^n \log x\; dx - \frac 12 \int_1^{n} (\log x)^2\; dx = \frac 12 n(\log n)^2.
 \eeqax
 Using \corO{a similar argument,}
 $$J^{i,j}_{1,2} \asymp \sum_{k=j+1}^{n-1} \frac{1}{j-i}[(k-j)^{-1}-(k-i)^{-1}] \asymp \frac{1}{j-i}\log\frac{(n-j)(j-i)}{n-i-1}$$ 
 \corO{and therefore   
 $ \sum_{0\le i<j<n}  J^{i,j}_{1,2}\asymp n(\log n)^2/2$.}
 For the sum $J^{i,j}_{1,3}$, \corO{we have that}
 \[  J^{i,j}_{1,3}\leq 
 \sum_{k: i+1 \le k \le j-1} \frac{1}{j-i}[(k-i)^{-1}+(j-k)^{-1}]
    \asymp \frac{1}{j-i} \int_{1}^{j-i} \frac 1x \; dx \asymp  \frac{\log(j-i)}{j-i}, \]
    \corO{and therefore}
    \[ \sum_{0\le i<j<n} J^{i,j}_{1,3} =\sum_{0\le i<n} \sum_{j-i=1}^{n-1-i} \frac{\log(j-i)}{j-i} \asymp  \sum_{0\le i<n} \int_1^{n-i} \frac{\log x}{x}\; dx \asymp \sum_{0\le i<n} (\log(n-i))^2 \asymp n(\log n)^2. \]
 Combing the last 
 \corO{estimates, we obtain that}
 $J_1 =\sum_{k=1}^3\sum_{0\le i<j<n} J^{i,j}_{1,k} \asymp \corO{n(\log n)^2}$. 
 This together with the bound on $J_2$  gives the result. 

\noindent
(6). Note that $\norm{[\vA(\sV_n)]^2}_F^2$ equals
\beqax  
\sum_{\vu,\vv\in\sV_n} \left(\left(\left[\vA(\sV_n)\right]^2\right)_{\vu,\vv}\right)^2 
&\asymp& n^2\left[\sum_{0\le u_1< v_1<n}\left( \sum_{\vw \in \llb\vu\rrb \cap \llb\vv\rrb} (|u_1-w_1|\cdot|v_1-w_1|)^{-1}\right)^2\right. \\
  && +  \left.\sum_{u_1\in[n]_0}\left(\sum_{\vw \in \llb\vu\rrb} (w_1-u_1)^{-2}\right)^2\right]  \;\; =: \;\; \corO{K_1+K_2}. 
\eeqax
In order to estimate $K_2$ note that for any $u_1, w_1 \in [n]_0$, $|\{w_2: \vw\lrs  \vu\}| \asymp |w_1-u_1|$. So
\[
K_2 \asymp  n^2 \sum_{u_1\in[n]_0}\left(\sum_{w_1\in[n]_0, w_1\ne u_1} |w_1-u_1|^{-1}\right)^2 \asymp n^3 \log^2n.\]
Next note that  the $(u_1,v_1)$-th inner sum of $K_1$ equals $n^2$ times
 \[ (I_{1,1}^{u_1,v_1} + I_{1,2}^{u_1,v_1} + I_{1,3}^{u_1,v_1})^2 
   \corO{\asymp}
    \left[(I_{1,1}^{u_1,v_1})^2 + (I_{1,2}^{u_1,v_1})^2 +( I_{1,3}^{u_1,v_1})^2\right],\]
where $I_{1,i}^{u_1,v_1}, i=1,2,3,$ are as in \eqref{Iij11def}. 
 \corO{We have
   that $I^{u_1,v_1}_{1,1}\asymp\log\frac{v_1}{v_1-u_1}$, see \eqref{eq-11.1},}
   and therefore 
 \beqax
 \sum_{0\le u_1<v_1<n}  \left(I^{u_1,v_1}_{1,1}\right)^2 &\asymp& \sum_{0\le u_1<v_1<n} \log^2v_1 - \sum_{0\le u_1<v_1<n} \log^2(v_1-u_1)\\
    &=& \sum_{j=1}^{n-1} j\log^2j  
      - \sum_{i=0}^{n-2} \sum_{k=1}^{n-1-i} \log^2k
      = \sum_{j=1}^{n-1} j\log^2j - \sum_{k=1}^{n-1} (n-k)\log^2k\\ 
      &=& \sum_{j=1}^{n-1} 2j \log^2j - n    \sum_{j=1}^{n-1} \log^2j 
      \asymp \int_1^n 2x\log^2x\; dx - n\int_1^{n} \log^2x\; dx \\&=& n^2(\log^2n - \log n) -\frac{n^2-1}{2} - n^2(\log^2n-2\log n)+ 2n(n-1) \asymp n^2\log n.
 \eeqax
 Using similar argument $\sum_{0\leq i<j<n}
 (I^{i,j}_{1,2})^2 \asymp n^2\log n$.   
  \corO{We also have that 
  $\sum_{0\le u_1<v_1<n} (I^{u_1,v_1}_{1,3})^2 \leq C n^2$, see
  \eqref{eq-11.5}}.  Combing the \corO{last} 
  estimates, $I_1$ and hence $\norm{[\vA(\sV_n)]^2}_F^2$ is 
  \corO{$\asymp n^4\log n$}. This together with 
  \corO{Lemma \ref{A:properties}(1)} 
  gives the result. 
\end{proof}

\subsection{Proof of Proposition \ref{Quad Kolmogorov Bd}} \label{Quad K Bd proof}
\begin{proof}[Proof of Proposition \ref{Quad Kolmogorov Bd}]
  Using Theorem \ref{IID Quad Bd} and   \corO{Lemma \ref{A:properties}(2)},  
\beq \label{gaussianbd} 
\sup_{x\in\dR} \left|\pr\left(G_n([\bar\vA(\sV_n)]) \le x\right) - \gF(x)\right| \le C(\log n)^{-1/2}. \eeq
We obtain $\nu_\vv, s_\vv^2, \gD$ and $L$ (as defined in \eqref{LDelta def}) for $[\bar\vA(\sV_n)]$ and $(U_\vv, \vv\in \sV_n)$. 
\corO{By Lemma \ref{A:properties}(6) we have}
$\gD\asymp (\log n)^{-1/2}$, so we can apply
heorem \ref{IND Quad Bd}. Suppose $F_\vv$ denotes the distribution 
function  of $U_\vv$ and let $\bar F_\vv:=1-F_\vv$ and $\bar \gF:=1-\gF$.
\corO{Then},
\beqax 
\nu_\vv  &\le& \int_0^\infty 3x^2 (\bar F_\vv(x) + \bar \gF(x)) \; dx  + \int_{-\infty}^0 3x^2 (F_\vv(x) +\gF(x))\; dx  \\
&=& \int_0^\infty (\bar F_\vv(x)+\bar\gF(x) + F_\vv(-x)+\gF(-x)) \; d(x^3) \\
&=& \int_0^\infty (\pr(|X_{F_\vv}|^3>y)  + \pr(|X_\gF|^3>y) )\; dy =\int_\dR|x|^3\; d(F_\vv+\gF)(x)
\le \gb_3+4/\sqrt{2\pi}.
\eeqax
\corO{We have that
  $\sum_{\vu\in\sV_n} [\vA(\sV_n)]_{\vv,\vu}^2 \asymp \log n$ and $\sum_{\vu\in\sV_n} [\vA(\sV_n)]_{\vv,\vu}^3 \asymp 1$ for each $\vv\in\sV_n$,
  by the argument leading to \eqref{var0}.}
This together with Lemma \ref{A:properties}(1) gives 
$s_\vv^2 \asymp n^{-2},   \sum_{\vu,\vv\in\sV_n} [\bar\vA(\sV_n)]_{\vv,\vu}^3   \asymp \frac{1}{n\log{3/2}n}$. 
These estimates give  $L \le \frac Cn (1+\gb_3+\frac{\gb_3^2}{\log^{3/2}n})$. 
Using that,
Theorem \ref{IND Quad Bd} and \eqref{gaussianbd} give the claimed bound.
\end{proof}

\subsection{Proof of Proposition \ref{Quad}} \label{QuadProof}
 We begin with the following observation.
\begin{lem} \label{nlog n bd}
$\min_{I\subset[n]} \max_{S\in\{I,[n]\setminus I\}}\sum_{i,j\in S, i\ne j} |j-i|^{-1} \ge c n\log n$ for some constant $c>0$.
\end{lem}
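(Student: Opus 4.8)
The plan is to prove the combinatorial lower bound $\min_{I\subset[n]}\max_{S\in\{I,[n]\setminus I\}}\sum_{i,j\in S,\, i\ne j}|j-i|^{-1}\ge cn\log n$ by a dyadic-scale argument. First I would record the total sum identity: $\sum_{i,j\in[n],\, i\ne j}|j-i|^{-1}=2\sum_{d=1}^{n-1}(n-d)/d\asymp n\log n$, so the full set $[n]$ achieves the asserted order, and the content of the lemma is that no bipartition can make \emph{both} halves much smaller.

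Next, fix $I\subset[n]$ and set $J:=[n]\setminus I$. The key step is to count, for each scale, the pairs at roughly that distance that land inside the larger part. For an integer $k$ with $0\le k\le \log_2 n$, partition $[n]$ into consecutive blocks $B_1,B_2,\ldots$ of length $2^k$ (the last possibly shorter). Within each block of length $2^k$, at least one of $I$ or $J$ contains at least $2^{k-1}$ of its elements; call that the "majority colour" of the block. By pigeonhole over the $\asymp n/2^k$ blocks, at least half of them share the same majority colour, say colour $c_k\in\{I,J\}$; let $S_k$ be that colour. Each such block contributes at least $\binom{2^{k-1}}{2}\asymp 4^k$ ordered pairs $\{i,j\}$ with $i,j$ in the block, hence $|j-i|\le 2^k$, so each such pair contributes at least $2^{-k}$ to $\sum_{i,j\in S_k}|j-i|^{-1}$; summing over the $\asymp n/2^k$ good blocks gives $\sum_{i,j\in S_k,\, i\ne j}|j-i|^{-1}\ge c'\,(n/2^k)\cdot 4^k\cdot 2^{-k}=c'n$. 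Since this holds for \emph{each} of the $\asymp\log n$ scales $k$, and since $S_k\in\{I,J\}$, one of $I,J$ is selected for at least half of these scales. Moreover the pairs counted at different scales are disjoint (a pair $\{i,j\}$ with $2^{k-1}\le|j-i|<2^k$ is counted only at scale $k$, after an easy bookkeeping adjustment restricting to pairs whose distance is comparable to the block length), so summing the per-scale bound $c'n$ over the $\asymp\log n$ scales assigned to that fixed part $S\in\{I,J\}$ yields $\sum_{i,j\in S,\, i\ne j}|j-i|^{-1}\ge c''\,n\log n$, which is the claim with $c=c''$.

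The main obstacle is making the disjointness-across-scales bookkeeping clean so that the per-scale contributions genuinely add rather than overlap: a naive count double-counts short pairs at every coarser scale. The fix is to count at scale $k$ only pairs $\{i,j\}$ lying in a common length-$2^k$ block \emph{and} satisfying $|j-i|\ge 2^{k-2}$ (say); within a block of length $2^k$ containing $\ge 2^{k-1}$ points of the majority colour, a positive fraction of the $\binom{2^{k-1}}{2}$ monochromatic pairs still have separation $\ge 2^{k-2}$ (since at most $O(4^{k}\cdot 2^{-2}\cdot\text{const})$... more simply: of the $\ge 2^{k-1}$ chosen points, the top quarter and bottom quarter in position are separated by $\ge 2^{k-2}$, giving $\ge (2^{k-3})^2$ such pairs), so the bound $\ge c'n$ per scale survives, and now each pair is counted at $O(1)$ scales. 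An even cleaner alternative, which I would fall back on if the block bookkeeping gets fiddly, is to use a continuous/averaging formulation: write $f(I):=\sum_{i\ne j\in I}|i-j|^{-1}$ and note $f(I)+f(J)=f([n])-\sum_{i\in I,j\in J}|i-j|^{-1}$, then bound the cross term $\sum_{i\in I,j\in J}|i-j|^{-1}$ by $o(n\log n)$ is \emph{false} in general, so this shortcut does not work directly — hence I expect the dyadic argument above to be the one that goes through, and I would present it in that form.
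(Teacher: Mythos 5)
Your argument is correct, but it is genuinely different from the one in the paper. The paper first replaces the max by the average and discards the smaller part, reducing the lemma to a lower bound on $B_n(\alpha):=\min_{|I|=\lfloor \alpha n\rfloor}\sum_{i,j\in I, i\neq j}|i-j|^{-1}$ at density $\alpha=1/2$; it proves this bound directly for $\alpha$ close to $1$ (namely $\alpha_0=4/5$, by subtracting the cross term $\sum_{i\in I, j\notin I}|i-j|^{-1}$ from the total $n\log n(1+o(1))$), and then transfers it down to density $\alpha_0/2<1/2$ using the scaling relation $B_{n/2}(\alpha)=2B_n(\alpha/2)$ together with monotonicity in $\alpha$. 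Your proof instead works scale by scale: at each dyadic scale $k$ one colour is the majority colour in at least half of the length-$2^k$ blocks, and the well-separated monochromatic pairs inside those blocks contribute $\gtrsim n$ to that colour at that scale; one colour then wins at least half of the $\asymp\log n$ scales, and the restriction to pairs with $|i-j|$ comparable to the block length makes the multiplicity of each pair across scales $O(1)$, so the per-scale contributions add up to $cn\log n$. Your bookkeeping fix is sound (the bottom and top quarters of the $\ge 2^{k-1}$ majority points in a block are separated by the $\ge 2^{k-2}$ points between them). The paper's route is shorter and exploits the exact self-similarity of the kernel $1/|i-j|$; yours is more elementary and more robust — it localizes the mass by scale, would apply to other homogeneous kernels, and bounds the max directly rather than through the average. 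Your closing remark correctly identifies that the naive cross-term shortcut fails at density $1/2$, which is precisely why the paper needs the rescaling step and why you need the multiscale count.
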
  

\begin{proof}
Let $A_n:=\min_{I\subset[n]} 
\max_{S\in\{I,[n]\setminus I\}}\sum_{i,j\in S, i\ne j} |j-i|^{-1}$. 
We have that
\begin{eqnarray}
  \label{eqYK1}
  A_n&\geq & \frac12 \min_{I\subset[n]}
  \left(\sum_{i,j\in I, i\ne j} |j-i|^{-1}+
  \sum_{i,j\in [n]\setminus I, i\ne j} |j-i|^{-1}\right) \nonumber\\
 &\geq &
 \frac12 \min_{I\subset [n], |I|\geq n/2}\sum_{i,j\in I, i\ne j} |j-i|^{-1}
 \nonumber \\
 &=& \frac12 \min_{\alpha\in [1/2,1]} \min_{I\subset [n], |I|= \lfloor\alpha n\rfloor}
 \sum_{i,j\in I, i\ne j} |j-i|^{-1}=: \frac12 \min_{\alpha\in [1/2,1]}
 B_n(\alpha)\,.
 \end{eqnarray}
 Since $\alpha \mapsto B_n(\alpha)$ is monotone, the lemma will follow
 from \eqref{eqYK1} if we show the existence of a constant $c>0$ so that, for all $n$ integer, 
 \begin{equation}
   \label{eqYK2}
   B_n(1/2)\geq cn\log n.
 \end{equation}

 To prove \eqref{eqYK2}, we begin by claiming that there exists $\alpha_0<1$
 and a constant $c>0$ 
 so that 
 \begin{equation}
   \label{eqYK3}
   B_n(\alpha_0)>cn\log n. 
 \end{equation}
 Indeed, with $|I|=\alpha n$,
 \begin{eqnarray*}
\sum_{i,j\in I, i\ne j} |j-i|^{-1}&=&
\sum_{i,j\in [n], i\neq j} |j-i|^{-1}-
\sum_{i,j\in [n]\setminus I, i\neq j} |j-i|^{-1}-
2\sum_{i\in [n]\setminus I, j\in I} |j-i|^{-1}\\
&\geq & n\log n(1+o(1))-2\sum_{i\in [n]\setminus I, j\in [n], j\neq i}
|j-i|^{-1}\\
&= & (1-4(1-\alpha))n\log n(1+o(1)).
\end{eqnarray*}
In particular, \eqref{eqYK3} holds with $\alpha_0=4/5$ and $c=1/5$.
Note that unfortunately, we can't yet take $\alpha_0=1/2$.

Continuing with the proof of \eqref{eqYK2}, note that by rescaling,
for any 
$\alpha\in (0,1)$,
\begin{equation}
  \label{eqYK4}
  B_{n/2}(\alpha)=2 B_n(\alpha/2).
\end{equation}
Thus, with $\beta_0:=\alpha_0/2<1/2$,
\[ B_n(\beta_0)= \frac12 B_{n/2}(2\beta_0)=
\frac12 B_{n/2}(\alpha_0)\geq \frac{c}{4} n\log n.\]
By the monotonicity of $\alpha\mapsto B_n(\alpha)$, this proves 
\eqref{eqYK2} and hence the lemma.
\end{proof}

Combining Lemma \ref{nlog n bd} with Lemma \ref{A:properties} we prove Proposition \ref{Quad}.

\begin{proof}[Proof of Proposition \ref{Quad}]
\corO{To minimize notation, we set $\sG_{\corS{n}}=\sG_{\corS{n}}^{(a)}$, and let all constants
depend implicitly on $a$.} 

\noindent 
 (1) Using the partial order $\preccurlyeq$, we order the random variables $\{Z_\vv: \vv \in \sV_n\}$ to have the $|\sV_n| \times 1$ column vector $\vZ_n$.  
 \corO{Set $\vR_{\vPi}=\sum_{\vv\in\vPi} Y_{\vv,\vPi} \ind_{\{\vv\}}$.}
Recalling the fact that $[\vA(\sV_n)]$ has zero diagonal entries,
 \beq\label{meaneq} 
 \E \vZ_n^T[\vA(\sV_n)]\vZ_n=0, \quad \E \vZ_n^T[\vA(\sV_n)]\vR_{\vPi}=0, \quad \E\vR_{\vPi}^T[\vA(\sV_n)]\vR_{\vPi}=0,\eeq
 because each summand of all the above quadratic forms  is product of independent random variables having mean zero. Observing that the summands $\vZ_n^T[\vA(\sV_n)]\vZ_n$  are uncorrelated, and using (1) of Lemma \ref{A:properties},
\[
\E[(\vZ_n^T[\vA(\sV_n)]\vZ_n)^2]= 4\sum_{(i,u)  \in \sV_n} \;\;\sum_{(j,v)\in \sV_n: (i,u) \precsim (j,v)}    (j-i)^{-2}  
 =  2\norm{[\vA(\sV_n)]}_F^2 \asymp n^2\log n.  \]
Since $W_n$ is the normalized version of  $\vZ_n^T[\vA(\sV_n)]\vZ_n$, $W_n$ has mean 0 and variance 1. 
Invoking Proposition \ref{Quad Kolmogorov Bd} we get  the desired bound.

 \noindent
(2) Using the partial order $\preccurlyeq$,  define  $|\sV_n| \times 1$ column vector $\vX_n=(X_\vv, \vv \in \sV_n)$. Also define  $\vgY_{\vPi} := \sum_{\vv\in\vPi} \nu_{\vv,\vPi} \ind_{\{\vv\}}$.
Clearly
\begin{align*}
&          \vX_n \eqd \begin{cases}
                       \vZ_n & \text{ under } \pr_0 \\
                       \vZ_n +  \vR_{\vPi} + \vgY _{\vPi}  & \text{ under } \pr_{1,\vPi}
                         \end{cases},  \text{ so }
          Q_n \eqd \begin{cases}
                       W_n & \text{ under } \pr_0 \\
                       W_n +  U_n + \nu(\sG_n)  & \text{ under } \pr_{1,\vPi}
                         \end{cases}, \notag \\
 &  \text {where }
 U_n := 2\vgY_{\vPi}^T[\bar\vA(\sV_n)]\vZ_n+2\vgY_{\vPi}^T[\bar\vA(\sV_n)]\vR_{\vPi} +2\vR_{\vPi}^T[\bar\vA(\sV_n)]\vZ_n + \vn_{\vPi}^T[\bar\vA(\sV_n)]\vR_{\vPi}  \\
 & \text{ and } h(\vPi,\nu_n) :=\vgY_{\vPi}^T[\bar\vA(\sV_n)]\vgY_{\vPi}  
    \end{align*}
  It follows from \eqref{meaneq} and the facts $\E\vZ_n= \E \vR_{\vPi}=\mathbf 0$ that each summand of $U_n$ has mean 0, so $\E U_n = 0$.   Now we observe that 
 \begin{enumeratea}
 \item $[\bar\vA(\cdot)]$ has zero diagonal entries .
 \item $\E[\vZ_\vv W_n] =\E[Y_{\vv,\vPi} W_n] =0$ for all $\vv\in\sV_n$, as components of $\vZ_n$ are independent  with mean 0.
 \item $\E[\vZ_\vv\vZ_{\vv'}Y_{\vu,\vPi}Y_{\vu',\vPi}] = \E[\vZ_\vv\vZ_{\vv'}\vZ_{\vu}Y_{\vu',\vPi}=0$ for all $\vv, \vv', \vu, \vu' \in \sV_n$ satisfying $\vv\ne\vv', \vu\ne\vu'$, as $\{(\vZ_\vv,Y_{\vv,\vPi}): \vv\in\sV_n\}$ are independent, and $\vZ_\vv$ and $Y_{\vv,\vPi}$ are uncorrelated for all $\vv\in\sV_n$.
   \item  $\E[\vZ_n\vZ_n^T] = I_{|\sV_n|}$ (the identity matrix) , $\E[\vR_{\vPi}\vR_{\vPi}^T] \leqq \textrm{Diag}(\ind_{\vPi})$.
 \end{enumeratea}
 Using these observations, each of the summands of $ W_nU_n$ has mean zero, so $\E W_nU_n=0$.  Also
 \beqax 
 \E[(\vR_{\vPi}^T[\bar\vA(\sV_n)]\vZ_n)^2] 
 &=& \frac 12 \norm{[\vA(\sV_n)]}_F^{-2} \sum_{\vv\in\vPi} \sum_{\vv'\in\llb\vv\rrb} \E Y_{\vv,\vPi}^2 \E\vZ_{\vv'}^2(A_{\vv,\vv'})^2 \\
 \E[(\vR_{\vPi}^T[\bar\vA(\sV_n)]\vR_{\vPi})^2] 
 &=& \frac 12 \norm{[\vA(\sV_n)]}_F^{-2} \sum_{\vv\in\vPi} \sum_{\vv'\in\llb\vv\rrb\cap\vPi} \E Y_{\vv,\vPi}^2 \E Y_{\vv',\vPi}^2(A_{\vv,\vv'})^2
 \eeqax
 Using Lemma \ref{A:properties}(1), both terms in the above display are
 \beqax
 &\le&  \norm{[\vA(\sV_n)]}_F^{-2} \sum_{0\le i<j<n}\sum_{v:(i,u)\lrs(j,v) \text{ for some } u\in\vPi_i} (j-i)^{-2}  \\
 &\asymp& \frac{1}{n^2\log n}  \sum_{0\le i<j<n} (j-i)^{-1}
 \asymp \frac{1}{n^2\log n}  \sum_{0\le i<n-1} \log(n-i)
 \asymp \frac{n\log n}{n^2\log n}=1/n.
 \eeqax
 Combining these estimates with Lemma \ref{A:properties}(4),(5) and using 
 the Cauchy-Schwartz inequality, 
 \beqax
 \E U_n^2
 &\le& 4[4\E(\vgY_{\vPi}^T[\bar\vA(\sV_n)]\vZ_n)^2 + 4\E(\vgY_{\vPi}^T[\bar\vA(\sV_n)]\vR_{\vPi})^2 + 5/n] \\
 &=& 16 [ \vgY_{\vPi}^T[\bar\vA(\sV_n)]\E (\vZ_n\vZ_n^T)[\bar\vA(\sV_n)]\vgY_{\vPi} + 
 \vgY_{\vPi}^T[\bar\vA(\sV_n)]\E (Y_{\vPi}Y_{\vPi}^T)[\bar\vA(\sV_n)]\vgY_{\vPi}] + 20/n \\
 &\le& C\bar\nu^2[\ind_{\vPi}^T [\bar\vA(\sV_n)]^2\ind_{\vPi} + \ind_{\vPi}^T [\bar\vA(\sV_n)] \textrm{Diag}(\ind_{\vPi}) [\bar\vA(\sV_n)] \ind_{\vPi}] +1/n \\
 &\le& C\bar\nu^2 [\frac{1}{\log n} +\frac{\log n}{n}] +1/n.
 \eeqax
 
 \noindent 
 (3) For each $i\in I$, define $\vv_i:=\arg\max_{\vv\in\vPi} \nu_{\vv,\vPi}$. If we let $\vPi(I):=\{v_i: i\in I\}$, then it is easy to see that $\nu(\sG_n) \ge \underline\nu^2\ind_{\vPi(I)}^T[\bar\vA(\sG_n)]\ind_{\vPi(I)}$. Since $|\vPi(I)|\ge n/2$, we can use Lemma \ref{nlog n bd} and conclude that there is a constant $c\in(0,1)$ such that $\ind_{\vPi(I)}^T[\vA(\sG_n)]\ind_{\vPi(I)} \ge cn\log n$. Combining this with  Lemma \ref{A:properties}(1) we get the desired lower bound.
 
  For the upper bound note that $\nu(\sG_n) \le \bar\nu^2 \ind_{\vPi}^T[\bar\vA(\sV_n)]\ind_{\vPi}$, and use Lemma \ref{A:properties}(1),(3). 
 \end{proof}

\addtocontents{toc}{\protect\setcounter{tocdepth}{0}}
\section*{Acknowledgments}
\corO{The authors thank Hubert Lacoin for the reference to \cite{BL15}.}

\addtocontents{toc}{\protect\setcounter{tocdepth}{1}}
\bibliographystyle{amsplain}
\bibliography{DetectionBib}

\addtocontents{toc}{\protect\setcounter{tocdepth}{0}}

\end{document}